\documentclass[reqno,10pt]{amsart}
\usepackage{amssymb}
\usepackage{latexsym}
\usepackage{hyperref}
\usepackage{amsmath}
\usepackage{amscd}
\usepackage{color}
\usepackage{enumerate}
\usepackage{amsfonts}
\usepackage{graphicx}
\usepackage{mathrsfs}
\usepackage{pifont}
\usepackage{setspace}

\usepackage{tabu}
\newcommand{\eps}{{\varepsilon}}
\theoremstyle{plain}
\newtheorem{theorem}{Theorem}

\newtheorem{lemma}[theorem]{Lemma}

\theoremstyle{definition}
\newtheorem{definition}[theorem]{Definition}
\newtheorem{remark}[theorem]{Remark}

\numberwithin{equation}{section}
\numberwithin{theorem}{section}

\def\ge{\geqslant}

\def\leq{\leqslant}

\begin{document}

\title[extremal hypersurface equation in Minkowski space]
{Global well-posedness for radial extremal hypersurface equation in $\left(1+3 \right)$-dimensional Minkowski space-time in critical Sobolev space}	

\author[S. Wang]{Sheng Wang}
\address{Shanghai Center for Mathematical Sciences, Fudan University, Shanghai 200433, P.R. China}
\email{19110840011@fudan.edu.cn}

\author[Y. Zhou]{Yi Zhou}
\address{School of Mathematics Science, Fudan University, Shanghai 200433, P.R. China}
\email{yizhou@fudan.edu.cn}

\subjclass[2010]{Primary: 35Q55; Secondary: 35B44}
\date{\today}
\keywords{Extremal Hypersurface Equation; Div-Curl Lemma; Radial Symmetry Gobal Well-posedness; Critical Regularity.}
\maketitle

\begin{abstract}
	In this article, we  prove the global well-posedness in the critical Sobolev space $H_{rad}^2\left(\mathbb{R}^2\right) \times H_{rad}^1 \left(\mathbb{R}^2\right)$ for the radial time-like extremal hypersurface equation in $\left(1+3\right)$- dimensional Minkowski space-time. This  is achieved by deriving a new div-curl type lemma and combined it with energy and ``momentum" balance law to get some space-time estimates of the nonlinearity.
\end{abstract}

\section {Introduction}
 It is well-known that extremal surface in Minkowski space is the $C^2$ surface with vanishing mean curvature under the Minkowski metric. Moreover, the time-like extremal surface in Minkowski space is an important model in Lorentzian geometry and nontrivial example of membrane in field theory.(see Hoppe \cite{Hoppe 1994})

Let $\left(t,x_1,x_2, y\right)$ be any point in the $\left(1+3\right)$-dimensional Minkowski space-time $\left(\mathbb{R}^{1+3}, m\right)$, where $m=diag\left(-1,1,1,1\right)$
is the Minkowski metric. An  area functional can be introduced as follow:
\begin{equation}\label{af}
	\mathbb{J}\left(\phi\right):=\int_{\mathbb{R}_+ \times \mathbb{R}^2} \sqrt{1+\lvert \nabla_x \phi \rvert^2 -\lvert \phi_{t}\rvert^2} dxdt, 
\end{equation}
 where $x=\left(x_1, x_2\right)$.
 A time-like hypersurface $y= \phi\left(t,x_1,x_2\right)$ is called extremal (or minimal), if $\phi$ is the  critical point of the area functional \eqref{af}. 
Some calculus of variations for the area functional \eqref{af} derives the Euler-Lagrange equation
\begin{equation}\label{yfc}
	\left(\frac{\phi_{t}}{\sqrt{1+\lvert \nabla_x \phi \rvert^2 -\lvert \phi_{t}\rvert^2}}\right)_t-\nabla_x\cdot\left(\frac{\nabla_x\phi}{\sqrt{1+\lvert \nabla_x \phi \rvert^2 -\lvert \phi_{t}\rvert^2}}\right)=0.
\end{equation}
The assumption that the hypersurface is time-like, i.e. $1+\lvert \nabla_x \phi \rvert^2 -\lvert \phi_{t}\rvert^2 > 0$, 
guarantees the area functional \eqref{af} and the extremal hypersurface equation \eqref{yfc} makes sense.

In this article, we will consider time-like extremal hypersurface with radial symmetry $\phi\left(t,x\right)=\phi \left(t,r\right), r=\lvert x \rvert$. Thus the area functional is 
\begin{equation}
	\mathbb{J} \left(\phi\right):= 2\pi \int_{\mathbb{R}_+ \times \mathbb{R}^2} r \sqrt{1+\lvert  \phi_r \rvert^2 -\lvert \phi_{t}\rvert^2} drdt.
\end{equation}
Therefore, the corresponding Euler-Lagrange equation becomes 
\begin{equation}\label{equ}
(\frac{r \phi_t }{\Delta^{1/2} })_t - (\frac{r \phi_r }{\Delta^{1/2}})_r=0,
\end{equation}
where  
\begin{equation*}
\Delta :=\left(1+\phi_r^2-\phi_t^2\right).
\end{equation*}

We can also rewrite the extremal hypersurface equation \eqref{equ} as the following quasilinear wave equation, 
\begin{equation}
r\left(1-\phi^2_{t}\right)\phi_{rr}=r\phi_{tt}\left(1+\phi^2_r\right)-2r\phi_t\phi_r\phi_{tr}-\phi_r \Delta.
\end{equation}

Consider the initial value problem of general quasilinear wave equation as follows
\begin{equation}\label{qlus}
\left\{
\begin{aligned}
& g^{ij}\left(\partial u\right) \partial_i\partial_j u= q^{ij}\left(\partial u\right) \partial_i\partial_j u,\\
& t=0: u=u_0 \in H^s\left(\mathbb{R}^d\right), u_{t}=u_1 \in H^{s-1}\left(\mathbb{R}^d\right),
\end{aligned}
\right. 
\end{equation}
where the metric matrix $g^{ij}$ as well as its inverse $g_{ij}$ satisfies the hyperbolicity condition. $g^{ij}\left(\partial u\right)$, $g_{ij}\left(\partial u\right)$
and $q^{ij}\left(\partial u\right) $ are the bounded smooth functions of $\partial u$ and have global bounded derivatives.

Since $u_{\lambda}\left(t,x\right):= \lambda u\left(\frac{t}{\lambda},\frac{x}{\lambda}\right)$ solves \eqref{qlus} for any $\lambda >0$, provides that $u\left(t,x\right)$ is a solution. This fact indicates the critical homogeneous Sobolev space $\dot{H}^{s_c}$ with
\begin{equation}
	s_c = \frac{d+2}{2},
\end{equation}
which provides a lower bound of the regularity of the problem to be well-posed in $H^s$. At the same time, according to the propagation of singularities along the light cone, the ill-posedness for this problem at the regularity level $s\leq s_l= \frac{d+9}{4}$. (see Lindblad \cite{Linblad 1998})

The extremal hypersurface equations can be regarded as a special kind of quasilinear wave equations with nice structure of nonlinearity. In (1+2)- dimensional Minkowski space the problem is fully solved by Kong-Sun-Zhou in \cite{Kong 2006} and \cite{Sun 2006}.  In higher dimension, Brendle \cite{Brendle 2002}, Lindblad \cite{Lindblad 2004} proved that the general minimal surface equations admit global smooth solutions for small initial data.  And it is generalized to any co-dimensions by Allen et al. in \cite{Allen 2006}.  Liu-Zhou \cite{Liu 2019} studied the traveling waves to the time-like extremal
hypersurface in Minkowski space. Besides, the global existence of smooth solutions for small initial data to Born-Infeld equations has been obtained by Chae and Huh in \cite{Chae 2003}. These results are all about classical solutions with small initial data. For the low regularity case, Smith-Tataru \cite{Smith 2005} proved when initial data $\left(\phi_0, \phi_1 \right) \in  H^s\left(\mathbb{R}^2\right) \times H^{s-1} \left(\mathbb{R}^2\right)$ for $ s>2+\frac{3}{4}$  and $\left(\phi_0, \phi_1 \right) \in  H^s\left(\mathbb{R}^d\right) \times H^{s-1} \left(\mathbb{R}^d\right) $ for $s>\frac{d+2}{2}+\frac{1}{2}$ with $ 3\leq d \leq 5$, the quasilinear waves \eqref{qlus} is local well-posedness. To go below $s_l$ need to explore the null structure of the quasilinear wave equation. Based on this, recently, Ai-Ifrim-Tataru \cite{Ai 2021} improve the well-posedness index to $s>2+\frac{3}{8}$  in two space dimensions and $s>\frac{d+2}{2}+\frac{1}{4} $ in higher dimensions for the the time-like extremal hypersurface equation in Minkowski space-time. 

In this paper, we will consider the following Cauchy problem for radially symmetrical solutions to the extremal hypersurface equations in (1+3)-dimensional Minkowski space
\begin{equation}\label{equs}
\left\{
\begin{aligned}
& (\frac{r \phi_t }{\Delta^{1/2} })_t - (\frac{r \phi_r }{\Delta^{1/2}})_r=0,\\
& t=0: \phi=\phi_0, \phi_{t}=\phi_1,
\end{aligned}
\right. 
\end{equation}
with initial data
\begin{equation}\label{ind}
\phi_0\in	H^s\left(\mathbb{R}^2\right),\qquad \phi_1\in H^{s-1} \left(\mathbb{R}^2\right).
\end{equation}

\begin{definition}
	We call a function $\phi\left(t,r\right)$ is a strong solution with initial data $\left(\phi_0,\phi_1\right) \in H_{rad}^2\left(\mathbb{R}^2\right) \times H_{rad}^1 \left(\mathbb{R}^2\right) $ on the time interval $\left[0, T^*\right]$, if there exists sequences of smooth functions $\left(\phi^{\left(n\right)}_0, \phi^{\left(n\right)}_1\right)$ and smooth solutions $\phi^{\left(n\right)}$ of system \eqref{equs} on the time interval $\left[0, T^*\right]$ with $\left(\phi^{\left(n\right)}_0, \phi^{\left(n\right)}_1\right)$ as initial data such that as $n \rightarrow \infty$,  $\left(\phi^{\left(n\right)}_0, \phi^{\left(n\right)}_1\right)$ converges strongly to $\left(\phi_0, \phi_1\right)$ in $ H_{rad}^2\left(\mathbb{R}^2\right) \times H_{rad}^1 \left(\mathbb{R}^2\right) $ and $\phi^{\left(n\right)}$
	converges weakly $-*$ to $\phi$ in $L^{\infty}\left(\left[0, T^*\right], H_{rad}^2\left(\mathbb{R}^2\right)\right) \cap W^{1,\infty}\left(\left[0, T^*\right], H_{rad}^1\left(\mathbb{R}^2\right)\right) $.
\end{definition}
\begin{definition}
	We define the norm of $H_{rad}^2\left(\mathbb{R}^2\right) \times H_{rad}^{1}\left(\mathbb{R}^2\right)$ as follow
	\begin{equation}
	\|\left(\phi_0, \phi_1\right)\|^2_{H_{rad}^2\left(\mathbb{R}^2\right) \times H_{rad}^{1}\left(\mathbb{R}^2\right)}:=\int r\phi^2_{0rr}+\frac{\phi^2_{0r}}{r} + r\phi^2_{_{1r}} + \frac{\phi^2_1}{r} dr 
	\end{equation}
\end{definition}


For the traditional work, the authors usually focused on the energy leval to study the well-posedness. In this work, we observe some nonlinear structure for the time-like extremal hypersurface \eqref{equ} associated with null-form and derive a new div-curl type lemma that connects energy and ``momentum". In fact, the div-curl type lemma was studied by many famous mathematicians, since this type of lemma is a powerful tool to study the nonlinear partial differential equation. Murat \cite{Murat 1979} and Tatar \cite{Tatar 1979 } proved a class of div-curl type lemma associated with weekly convergence which was used to study the entropy weak solution of conservation law (see Dafermos \cite{Dafermos 2010}). Coifman-Lions-Meyer-Semmes studied a class of div-curl type lemma in Hardy space which was used to study the elliptic equations and wave mapping (\cite{HE 2002}). In our method, we can use the new div-curl type lemma to study some balance law (i,e. $f_t + f_r = f^{re}$).  Particularly, we can use the new div-curl lemma to fully explore the energy balance law and ``momentum" balance law. Interestingly, in our approch, the ``momentum" balance law is as important as the energy balance law, this gives a new perspective in the study of the estimates of the nonlinearity.  Based  on this, we obtain the global well-posedness in critical Sobolev space from a new perspective.  Finally, it is worth mentioning that our method can be applied to  Waves, Schr\"odinger and other dispersive equations only if energy and momentum are conserved.

Now we will state our main result as follows.
\begin{theorem}\label{1.3}
	Consider Cauchy problem \eqref{equs} \eqref{ind} for  radially symmetrical solutions to the extremal hypersurface equations in (1+3)-dimensional Minkowski space-time. Under the assumption $\phi_{t}\big |_{r=0} =0$, when the initial data $\left(\phi_0, \phi_1\right) \in H^s\left(\mathbb{R}^2\right) \times H^{s-1} \left(\mathbb{R}^2\right)$  with $s\ge 3 $, and $\|\left(\phi_0, \phi_1\right)\|_{H_{rad}^2\left(\mathbb{R}^2\right) \times H_{rad}^{1}\left(\mathbb{R}^2\right)} \leq \eps$ which is sufficiently small, then there exists a unique classical solution $\phi \in C^0\left(\left[0, T\right], H^s\left(\mathbb{R}^2\right)\right) \cap C^1\left(\left[0, T\right], H^{s-1}\left(\mathbb{R}^2\right)\right)$  with $s\ge 3$ for any $T>0$. 
\end{theorem}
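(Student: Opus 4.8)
The plan is to run a bootstrap on the critical norm, closed by a scale-invariant a priori estimate in which the new div--curl lemma converts the space--time integral of the (once-differentiated) nonlinearity into conserved energy and ``momentum'' quantities. Since $s\ge 3>2+\tfrac34$, the quasilinear reformulation of \eqref{equs} is locally well-posed by Smith--Tataru \cite{Smith 2005}: there is a unique radial solution $\phi\in C^0([0,T_0];H^s)\cap C^1([0,T_0];H^{s-1})$, with $T_0$ depending only on the $H^s\times H^{s-1}$ norm and with $\Delta=1+\phi_r^2-\phi_t^2\ge\delta_0>0$. Let $[0,T^\ast)$ be the maximal interval of existence; by the continuation criterion it suffices to keep $\|\phi(t)\|_{H^s}+\|\phi_t(t)\|_{H^{s-1}}$ finite on $[0,T^\ast)$, which in turn follows once we control the critical quantity $N(t)^2:=\int_0^\infty\big(r\phi_{rr}^2+\tfrac{\phi_r^2}{r}+r\phi_{tr}^2+\tfrac{\phi_t^2}{r}\big)\,dr\simeq\|(\phi,\phi_t)\|^2_{H_{rad}^2\times H_{rad}^1}$, for which $N(0)\le\eps$. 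The one-dimensional radial bounds $\|\phi_r\|_{L^\infty}^2\lesssim\big(\int\tfrac{\phi_r^2}{r}dr\big)^{1/2}\big(\int r\phi_{rr}^2dr\big)^{1/2}$ and (using $\phi_t|_{r=0}=0$) $\|\phi_t\|_{L^\infty}^2\lesssim\big(\int\tfrac{\phi_t^2}{r}dr\big)^{1/2}\big(\int r\phi_{tr}^2dr\big)^{1/2}$ give $\|\pa\phi\|_{L^\infty}\lesssim N(t)$; thus under the bootstrap hypothesis $N(t)\le 2\eps$ on $[0,T]\subset[0,T^\ast)$ we have $\|\pa\phi\|_{L^\infty_{t,x}}\lesssim\eps$, $\Delta=1+O(\eps^2)$ (so the time-like condition is stable), and every smooth function of $\pa\phi$ occurring below equals $1+O(\eps^2)$ with uniformly bounded derivatives.

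\textbf{Energy and ``momentum''.} Multiplying \eqref{equ} by $\phi_t$, resp.\ by $\phi_r$, produces the conservation law $\pa_t\big(\tfrac{r(1+\phi_r^2)}{\sqrt\Delta}\big)-\pa_r\big(\tfrac{r\phi_t\phi_r}{\sqrt\Delta}\big)=0$ and the balance law $\pa_t\big(\tfrac{r\phi_t\phi_r}{\sqrt\Delta}\big)+\pa_r\big(\tfrac{r(1-\phi_t^2)}{\sqrt\Delta}\big)-\sqrt\Delta=0$. The first conserves the renormalised energy $\widetilde E=\int_0^\infty\big(\tfrac{r(1+\phi_r^2)}{\sqrt\Delta}-r\big)dr\simeq\int_0^\infty r(\phi_r^2+\phi_t^2)dr$ and, integrated over characteristic triangles, bounds the energy flux of $(\phi_r,\phi_t)$ across null lines; the second, integrated in $r$ (its renormalised boundary term at $\infty$ vanishing by the fast decay of $\pa\phi$, that at $0$ by $\phi_t|_{r=0}=0$) and then in $t$, controls $\int_0^T\!\!\int_0^\infty(\sqrt\Delta-1)\,dr\,dt$ by the ``momentum'' $\int_0^\infty\tfrac{r\phi_t\phi_r}{\sqrt\Delta}dr$ at times $0$ and $T$. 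In the null variables $\phi_t\pm\phi_r$ the energy controls the $L^2$-mass of $(\phi_t+\phi_r,\phi_t-\phi_r)$ while the ``momentum'' controls the difference of the two masses, so that together they bound the flux of \emph{each} null component along its own characteristic family — this is why the ``momentum'' is as important here as the energy. Differentiating all these identities once in $r$ and using the appropriate multipliers yields their analogues $\mathcal E_1\simeq N(t)^2$ and $\mathcal M_1$ at the $\pa^2\phi$-level, each $\lesssim\eps^2$ at $t=0$ (e.g.\ $\mathcal M_1(0)\sim\int r\phi_{1r}\phi_{0rr}\,dr\le\|\phi_1\|_{\dot H^1}\|\phi_0\|_{\dot H^2}\lesssim\eps^2$).

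\textbf{The div--curl estimate (the main step).} Written $(1-\phi_t^2)\phi_{rr}-(1+\phi_r^2)\phi_{tt}+2\phi_t\phi_r\phi_{tr}=r^{-1}\phi_r\Delta$, the quadratic part of the nonlinearity is, up to sign, the null form $\pa^\alpha\phi\,\pa_\alpha(\phi_r^2-\phi_t^2)$. Differentiating \eqref{equ} in $r$ and running the $\dot H^2\times\dot H^1$ energy estimate gives $\tfrac{d}{dt}\mathcal E_1(t)=\int_0^\infty(\mathrm{error})\,dr$ with $\mathcal E_1(t)\simeq N(t)^2$ (after a modified-energy correction), the error being cubic, schematically $\pa\phi\,(\pa^2\phi)^3\,r$ plus terms carrying an extra factor of the nonlinearity (of still higher order). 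The crucial structural fact is that this cubic error inherits the null form and can, in the variables $P:=\tfrac{r\phi_t}{\sqrt\Delta}$, $Q:=\tfrac{r\phi_r}{\sqrt\Delta}$ — which satisfy $P_t=Q_r$ by \eqref{equ}, so that $(Q,P)=\nabla\Psi$ for a potential $\Psi$ with $1+r^{-2}(\Psi_r^2-\Psi_t^2)=\Delta^{-1}$ and $\widetilde E(\Psi)=\widetilde E(\phi)$ — be exhibited as a product of a ``curl-free'' factor and a ``divergence-free'' one against a bounded multiplier. Applying the new div--curl lemma then replaces the space--time integral of such a product over $[0,T]\times\R_+$ by a product of the differentiated energy/``momentum'' fluxes of the previous step, each $\lesssim\eps^2$; extracting one factor $\|\pa\phi\|_{L^\infty_{t,x}}\lesssim\eps$ from the critical norm, we obtain $\int_0^T\big|\int_0^\infty(\mathrm{error})\,dr\big|\,dt\lesssim\eps\cdot\eps^2=\eps^3$, \emph{with no subcritical or $L^\infty$ bound on $\pa^2\phi$ used}. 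Hence $N(t)^2\le N(0)^2+C\eps^3\le\eps^2+C\eps^3<(\tfrac32\eps)^2$ for $\eps$ small, the bootstrap closes, and $\sup_{[0,T^\ast)}N(t)\lesssim\eps$.

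\textbf{Conclusion and the main difficulty.} With $\|\pa\phi\|_{L^\infty_{t,x}}\lesssim\eps$ and $\Delta\simeq1$ on all of $[0,T^\ast)$, the usual $H^s$ energy estimates for \eqref{equs} (differentiating up to order $s$, the coefficients being controlled by $\|\pa\phi\|_{L^\infty}$ and the space--time norms already bounded) give, by Gr\"onwall, $\|\phi(t)\|_{H^s}+\|\phi_t(t)\|_{H^{s-1}}<\infty$ for every $t<T^\ast$; therefore $T^\ast=\infty$, $\phi\in C^0([0,T];H^s)\cap C^1([0,T];H^{s-1})$ for all $T>0$, and uniqueness is inherited from the local theory. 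The heart of the proof is the div--curl step: one must isolate the exact null/div--curl structure of the once-differentiated nonlinearity so that its cubic space--time integral is bounded \emph{only} in terms of the energy and the ``momentum'', without any $L^\infty$ control of $\pa^2\phi$ — unavailable at the critical regularity $\dot H^2(\R^2)$, which embeds only into $\mathrm{BMO}$. This is also where scaling must be tracked with care: smallness is assumed only in the homogeneous critical norm, so the (finite but not small) subcritical energy $\widetilde E$ may enter an estimate only when accompanied by a factor $\lesssim\eps$ harvested from the critical norm.
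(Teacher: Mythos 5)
Your outline of the first stage — local well-posedness from Smith--Tataru for $s\geq 3$, then a bootstrap on the critical radial norm $N(t)$ with the one-dimensional Sobolev inequality giving $\|\partial\phi\|_{L^\infty}\lesssim N(t)$, closed by applying the div--curl lemma to a pair of energy/momentum balance laws — does match what the paper does in Section~\ref{ei}, in spirit though not in detail. Your description compresses substantially what in the paper is a \emph{coupled} system of estimates: besides the second-derivative energy $\hat E_2$ the paper has to control simultaneously the momentum-type space--time integrals $M(t), M_0(t)$, the mixed Morawetz-type integrals $M_1, M_2$ (extracted from the balance laws \eqref{ph5} and \eqref{ph6} with the multipliers $\phi_{tr}+\tfrac{\phi_t}{2r}$ and $\phi_{rr}+\tfrac{\phi_r}{2r}$, not from the once-differentiated energy multiplier you mention), and the four determinant integrals $\xi_1,\xi_2,\eta_1,\eta_2$ coming from pairing \eqref{ph1} with \eqref{ph5} and \eqref{ph3} with \eqref{ph5}; these close together by bootstrap. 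Your potential $\Psi$ with $(Q,P)=\nabla\Psi$ is a pleasant reformulation that the paper does not use; the paper's Lemma~\ref{dc} works directly with the antiderivative $\int_0^r f^{11}$ of the density. These are tolerable differences of presentation.

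The genuine gap is in your final paragraph (``Conclusion''). You claim that once $\|\partial\phi\|_{L^\infty_{t,x}}\lesssim\eps$ is known, ``the usual $H^s$ energy estimates \dots give, by Gr\"onwall, $\|\phi(t)\|_{H^s}+\|\phi_t(t)\|_{H^{s-1}}<\infty$ for every $t<T^*$.'' This does not close. At the $s=3$ level the top-order error in the energy identity is, after integration by parts, schematically $\int r\,\partial\phi\,\partial^2\phi\,(\partial^3\phi)^2\,dr$; bounding it needs a pointwise estimate on $\partial^2\phi$, which is exactly what is unavailable at this regularity in two space dimensions (the best interpolation gives $\|\partial^2\phi\|^2_{L^4(r\,dr)}\lesssim E_2^{1/2}E_3^{1/2}$, so $\tfrac{d}{dt}E_3\lesssim \eps\,E_3^{3/2}$, which blows up in finite time rather than closing by Gr\"onwall). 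You yourself observe in your own ``main difficulty'' paragraph that the div--curl step is needed \emph{precisely} because $\|\partial^2\phi\|_{L^\infty}$ is uncontrolled at critical regularity, yet your conclusion leans on exactly that missing control. The paper instead re-runs the entire div--curl machinery one derivative higher in Section~\ref{77}: it differentiates the equation twice in $t$ to get \eqref{ph7}, multiplies by $\phi_{ttt}$ to obtain the third-order balance law, pairs it with \eqref{ph2} and with \eqref{ph5} to form the determinants $C$ and $D$ of Section~\ref{55}, bounds the resulting space--time quantities $\zeta_1,\zeta_2,\gamma_1,\gamma_2,M_h$ via Lemma~\ref{dc}, and only after a further bootstrap obtains the uniform bound $E_3(t)\leq 16\,E_3(0)$; global existence then follows from the Smith--Tataru continuation criterion. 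Without this second pass through the div--curl argument at the $\partial^3\phi$ level, your proof does not reach $T^*=\infty$.

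Two further minor remarks. First, the paper derives its uniqueness for Theorem~\ref{1.3} from the stability estimate of Section~\ref{88}, proved by a homotopy in the initial data combined once more with the div--curl lemma; ``inherited from the local theory'' glosses over this (it is adequate for $s\geq 3$, but it is not the route the paper takes, and it would not survive the passage to the critical case in Theorem~\ref{1.4}). Second, your claimed balance law $\partial_t\bigl(\tfrac{r\phi_t\phi_r}{\sqrt\Delta}\bigr)+\partial_r\bigl(\tfrac{r(1-\phi_t^2)}{\sqrt\Delta}\bigr)-\sqrt\Delta=0$ should have a minus sign in front of the $\partial_r$ term to match the structure $(r\phi_t/\sqrt\Delta)_t-(r\phi_r/\sqrt\Delta)_r=0$ of \eqref{equ}, and the paper's versions \eqref{ph1}--\eqref{ph2} are the $r^{-2}$-weighted analogues, which is what makes the antiderivative in Lemma~\ref{dc} integrable near $r=0$ and near $r=\infty$.
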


\begin{theorem}\label{1.4}
		Consider Cauchy problem \eqref{equs} \eqref{ind} for  radially symmetrical solutions to the extremal hypersurface equations in (1+3)-dimensional Minkowski space-time. Under the assumption $\phi_{t}\big |_{r=0} =0$, when the initial data $\left(\phi_0, \phi_1\right) \in H_{rad}^2\left(\mathbb{R}^2\right) \times H_{rad}^{1} \left(\mathbb{R}^2\right)$ and $\|\left(\phi_0, \phi_1\right)\|_{H_{rad}^2\left(\mathbb{R}^2\right) \times H_{rad}^{1}\left(\mathbb{R}^2\right)} \leq \eps$ which is sufficiently small, then there exists a unique strong solution $\phi \in L^{\infty} \left(\left[0, T\right], H_{rad}^2\left(\mathbb{R}^2\right)\right) \cap W^{1,\infty} \left(\left[0, T\right], H_{rad}^{1}\left(\mathbb{R}^2\right)\right)$ for  any  $T>0$.
		Moreover, if we suppose $\phi$ and $\widetilde{\phi}$ are solutions of the extremal hypersurface equation \eqref{equ} evolved from initial data $\left(\phi_0, \phi_1\right)$ and $\left(\widetilde{\phi}_0, \widetilde{\phi}_1\right)$ which satisfy the smallness assumption, respectively, then we have the following estimates
		\begin{equation}
		\| \phi_{t}\left(t\right)-\widetilde{\phi}_t\left(t\right)\|_{L^2_{rad}} + \|\phi\left(t\right)-\widetilde{\phi}\left(t\right)\|_{H^1_{rad}} \leq 16 \left( \|\phi_0-\widetilde{\phi}_0\|_{H^1_{rad}} + \|\phi_1-\widetilde{\phi}_1\|_{L^2_{rad}}\right)
		\end{equation}
		for any $t\in \left(0, \infty\right)$.
\end{theorem}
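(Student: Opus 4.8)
The plan is to establish Theorem~\ref{1.4} as a consequence of Theorem~\ref{1.3} by a density/compactness argument, together with the a priori energy and ``momentum'' estimates that are the heart of this paper. First I would observe that Theorem~\ref{1.3} already gives, for each smooth approximating datum $(\phi_0^{(n)},\phi_1^{(n)})$ obtained by mollifying $(\phi_0,\phi_1)$, a global classical solution $\phi^{(n)}$. The key point is that the bound furnished by Theorem~\ref{1.3} on the interval $[0,T]$ must be \emph{uniform in $n$}: the smallness hypothesis is measured in the $H^2_{rad}\times H^1_{rad}$ norm, and since mollification does not increase this norm (by more than $o(1)$), each $(\phi_0^{(n)},\phi_1^{(n)})$ satisfies the $\eps$-smallness assumption for $n$ large. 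The div-curl lemma combined with the energy and momentum balance laws then yields an $n$-independent bound
\[
\sup_{t\in[0,T]}\|\phi^{(n)}(t)\|_{H^2_{rad}} + \sup_{t\in[0,T]}\|\phi^{(n)}_t(t)\|_{H^1_{rad}} \le C\eps,
\]
which in particular keeps $\Delta^{(n)}=1+(\phi^{(n)}_r)^2-(\phi^{(n)}_t)^2$ bounded away from $0$ uniformly.

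With this uniform bound in hand, the sequence $\{\phi^{(n)}\}$ is bounded in $L^\infty([0,T],H^2_{rad})\cap W^{1,\infty}([0,T],H^1_{rad})$, so by Banach--Alaoglu a subsequence converges weak-$*$ to some limit $\phi$ in that space, and $\phi$ inherits the norm bound by weak-$*$ lower semicontinuity. By construction $(\phi_0^{(n)},\phi_1^{(n)})\to(\phi_0,\phi_1)$ strongly, so $\phi$ is precisely a strong solution in the sense of the Definition above. This gives existence; membership in $L^\infty([0,T],H^2_{rad})\cap W^{1,\infty}([0,T],H^1_{rad})$ for arbitrary $T$ follows since $T$ was arbitrary in Theorem~\ref{1.3}.

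For uniqueness and the stability estimate, I would work directly with two solutions $\phi,\widetilde\phi$ (approximated by smooth solutions as above) and set $w=\phi-\widetilde\phi$. Subtracting the two copies of \eqref{equ}, $w$ satisfies a linear wave-type equation with variable coefficients built from $\phi,\widetilde\phi$ plus lower-order terms that are products of $\partial w$ with $\partial\phi$ or $\partial\widetilde\phi$; because both background solutions are small in $H^2_{rad}\times H^1_{rad}$, the coefficients are uniformly hyperbolic and the forcing terms are controlled by the small background norms times $\|\partial w\|$. Running the natural energy identity for $w$ — the same weighted $\int r\,w_t^2 + r\,w_r^2 + w^2/r\,dr$ that defines the $H^1_{rad}\times L^2_{rad}$ norm — and using Gronwall with the smallness of $\eps$ to absorb the variable-coefficient error terms, one obtains the factor $16$ (a crude but explicit constant coming from the size of $\eps$ and the coercivity constant of $\Delta$). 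Taking $(\widetilde\phi_0,\widetilde\phi_1)=(\phi_0,\phi_1)$ forces $w\equiv 0$, giving uniqueness.

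The main obstacle I anticipate is \emph{not} the soft functional-analytic packaging but the passage to the limit in the nonlinear term: to conclude that the weak-$*$ limit $\phi$ actually solves \eqref{equ} (rather than merely being a weak-$*$ limit of solutions), one must show that $\phi^{(n)}_r/\sqrt{\Delta^{(n)}}$ and $\phi^{(n)}_t/\sqrt{\Delta^{(n)}}$ converge in the appropriate (distributional) sense to $\phi_r/\sqrt\Delta$ and $\phi_t/\sqrt\Delta$. Weak-$*$ convergence of $\partial\phi^{(n)}$ alone does not pass through the nonlinearity; this is exactly where the new div-curl type lemma advertised in the introduction does the work, upgrading weak convergence to strong-enough convergence of the relevant bilinear quantities (the numerator/denominator combinations are null-form-like), much as compensated compactness handles conservation laws. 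I would therefore isolate a lemma — presumably already proved earlier in the paper — asserting that the map from $\partial\phi$ to the flux $\bigl(r\phi_t/\Delta^{1/2}, r\phi_r/\Delta^{1/2}\bigr)$ is weak-$*$ continuous on bounded sets of the energy space under the additional divergence-controlled structure, and invoke it to identify the limit. Everything else (uniform smallness of mollifications, weak-$*$ lower semicontinuity, the Gronwall step) is routine given Theorem~\ref{1.3} and the balance laws.
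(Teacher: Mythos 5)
Your overall plan — mollify, invoke Theorem~\ref{1.3} for each smooth approximant, extract a weak-$*$ limit by Banach--Alaoglu, and deduce uniqueness from a stability estimate — is the same skeleton as the paper's proof of Theorem~\ref{1.4}. But there are two places where you diverge, one where you are worrying about a non-issue and one where your proposed argument actually falls short.

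First, the ``passage to the limit in the nonlinear term'' that you flag as the main obstacle is not addressed anywhere in the paper, because the paper's \emph{definition} of a strong solution makes it unnecessary: a strong solution is, by fiat, a function that arises as the weak-$*$ limit of smooth solutions with strongly convergent initial data. There is no requirement that the limiting $\phi$ satisfy the PDE in any distributional sense. The lemma you postulate (``weak-$*$ continuity of the flux map'') does not appear in the paper and is not needed. You have read a stronger meaning into ``strong solution'' than the authors adopt.

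Second, and more seriously, your proposed uniqueness/stability argument via subtracting the two equations and running a Gronwall estimate would not yield the conclusion stated in Theorem~\ref{1.4}. The claimed estimate has an absolute constant $16$ uniform over $t\in(0,\infty)$; a Gronwall argument on a linearized equation with variable coefficients of size $O(\eps)$ produces a bound like $e^{C\eps t}$, which is unbounded as $t\to\infty$. The smallness of $\eps$ does not by itself give a time-uniform constant — you need the coefficient error terms to be integrable in time, and that integrability is exactly what the ``momentum'' balance law and the div-curl lemma supply. The paper obtains it (Section~\ref{88}) by the homotopy method: it defines $u(\lambda;t,r)$ to be the solution of \eqref{equtl} with initial data interpolating linearly between $(\phi_0,\phi_1)$ and $(\widetilde\phi_0,\widetilde\phi_1)$, differentiates in $\lambda$ to obtain the linearized system \eqref{1-derl}, sets up the balance laws \eqref{phu1}--\eqref{phu3} and determinant identities $\det C^\lambda$, $\det D^\lambda$ for the $\lambda$-derivative, and then applies the div-curl lemma exactly as in Sections~\ref{ei}--\ref{77} to get the time-uniform bound $E_\lambda(t)\le 16 E_\lambda(0)$; integrating over $\lambda\in[0,1]$ via the Newton--Leibniz formula $\phi-\widetilde\phi=\int_0^1 \partial_\lambda u\,d\lambda$ gives the displayed estimate. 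Your difference-of-solutions-plus-Gronwall approach would need to be upgraded to the full div-curl/balance-law machinery (which is essentially what the homotopy reformulation buys), or you would only reach a local-in-time version of the stability bound.
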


\begin{theorem}\label{1.5}
	Consider Cauchy problem \eqref{equs} \eqref{ind} for  radially symmetrical solutions to the extremal hypersurface equations in (1+3)-dimensional Minkowski space-time. Under the assumption $\phi_{t}\big |_{r=0} =0$, for any initial data $\left(\phi_0, \phi_1\right) \in H_{rad}^2\left(\mathbb{R}^2\right) \times H_{rad}^{1} \left(\mathbb{R}^2\right)$,  then  there exists a positive number $\delta$ which depends on the $\|\left(\phi_0, \phi_1\right)\|_{H_{rad}^2\left(\mathbb{R}^2\right) \times H_{rad}^{1}\left(\mathbb{R}^2\right)}$ and the profiles of initial data $\left(\phi_0, \phi_1\right) $, and a unique strong solution $\phi \in L^{\infty} \left(\left[0, T^*\right], H_{rad}^2\left(\mathbb{R}^2\right)\right) \cap W^{1,\infty} \left(\left[0, T^*\right], H_{rad}^{1}\left(\mathbb{R}^2\right)\right)$ for any $T^* \leq \delta$.
\end{theorem}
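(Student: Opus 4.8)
The plan is to derive the large-data local statement from the small-data global statement of Theorem \ref{1.4} by splitting the initial data into a highly regular piece and a piece that is small in the critical norm $\|\cdot\|_{H^2_{rad}\times H^1_{rad}}$. Given $(\phi_0,\phi_1)$ and the threshold $\eps$ furnished by Theorem \ref{1.4}, I would write $\phi_0=\psi_0+w_0$, $\phi_1=\psi_1+w_1$ with $(\psi_0,\psi_1)$ smooth (band-limited, or in $H^s$ for $s$ as large as needed) and $\|(w_0,w_1)\|_{H^2_{rad}\times H^1_{rad}}\le\eps$; such a splitting exists since smooth radial functions are dense in $H^2_{rad}\times H^1_{rad}$. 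How regular and how large (in high Sobolev norms) the piece $(\psi_0,\psi_1)$ must be taken in order to push the remainder below $\eps$ is dictated by the profile of $(\phi_0,\phi_1)$, not merely by its norm: since the critical norm is invariant under the natural scaling of the equation, an existence time depending on the norm alone would, by letting the scaling parameter tend to $0$, upgrade to a global statement. This is precisely the origin of the profile-dependence of $\delta$.

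Next I would solve \eqref{equs} with the smooth data $(\psi_0,\psi_1)$ by the classical local well-posedness theory for quasilinear wave equations (the regularity-propagation used behind Theorem \ref{1.3}): there is $T_1=T_1\big(\|(\psi_0,\psi_1)\|_{H^s}\big)>0$ and a smooth solution $\psi$ of \eqref{equs} on $[0,T_1]$ with all derivatives $\partial^k\psi$ bounded on $[0,T_1]\times\mathbb{R}^2$. Writing the sought solution as $\phi=\psi+v$ and using that $\psi$ is an exact solution, a direct expansion of \eqref{equ} shows that $v$ satisfies a quasilinear wave equation of the same structural type — its coefficients depending on $\partial\psi+\partial v$, hence sharing the null and hyperbolicity features exploited for Theorem \ref{1.4} — together with a forcing term $F$ that is linear in $\partial v$ with coefficients controlled by derivatives of $\psi$. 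The Cauchy data of $v$ is $(w_0,w_1)$, small in the critical norm.

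I would then run the apparatus behind Theorem \ref{1.4} — the new div-curl lemma together with the energy and ``momentum'' balance laws — on the equation for $v$, setting up a bootstrap on the critical norm $E(t)$ of $(v,v_t)(t)$ with hypothesis $E(t)\le 4\eps^2$. The purely self-interacting nonlinear fluxes are absorbed exactly as in Theorem \ref{1.4}, their smallness coming from the bootstrap bound on $E$; the new contributions — the forcing $F$ and the commutators generated by the $\psi$-dependence of the coefficients — are, once their top-order $\partial^2 v$ parts are pushed back into the div-curl mechanism, linear in $E$ with coefficient $C(\psi)=C\big(\sup_{[0,T_1]}\|\partial^k\psi\|_{L^\infty}\big)$, because the critical norm controls $\partial v$ and $\partial^2 v$ in $L^2$ simultaneously. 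A Gr\"onwall argument on $[0,T_1]$ then gives $E(t)\le\eps^2 e^{C(\psi)t}$, so the bootstrap closes on $[0,\delta]$ with $\delta:=\min\{T_1,\,C(\psi)^{-1}\log 2\}$, a quantity depending on $\|(\phi_0,\phi_1)\|_{H^2_{rad}\times H^1_{rad}}$ and on the chosen profile. Finally I would recover the strong solution in the sense of the Definition: regularize $(\phi_0,\phi_1)$ by smooth radial data $(\phi_0^{(n)},\phi_1^{(n)})$, apply the above to obtain smooth solutions $\phi^{(n)}$ on a common interval $[0,T^*]$ with $T^*\le\delta$ (via the classical continuation criterion, which fails only if the controlled critical quantity blows up), and extract a weak-$*$ limit in $L^\infty([0,T^*],H^2_{rad})\cap W^{1,\infty}([0,T^*],H^1_{rad})$; uniqueness follows from the difference estimate of Theorem \ref{1.4} applied on $[0,T^*]$.

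The step I expect to be the main obstacle is the a priori estimate for the equation satisfied by $v$. One must verify that the div-curl / balance-law structure of the nonlinearity — which is what allows Theorem \ref{1.4} to cope with the borderline failure of $\dot H^2(\mathbb{R}^2)\hookrightarrow W^{1,\infty}$ without loss — is preserved after the background is shifted by $\psi$, and that when the identities are differentiated to reach the critical level the cross terms pairing $\partial^2\psi,\partial^3\psi$ against quadratic expressions in $\partial v,\partial^2 v$ are genuinely linear in $E$; this rests on the simultaneous $L^2$-control of $\partial v$ and $\partial^2 v$ by the critical norm together with radial Sobolev embedding. A secondary delicate point is to ensure that the continuation criterion for the approximating smooth flows is phrased through a quantity actually dominated, uniformly in $n$, by the energy and ``momentum'' a priori bound.
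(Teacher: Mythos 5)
Your proposal takes a genuinely different route from the paper's. The paper does not split the \emph{data} into a smooth large piece and a critically-small remainder; it splits the \emph{spatial domain}. By absolute continuity of the Lebesgue integral one chooses $\delta_1>0$ so small that
\begin{equation*}
\int_0^{2\delta_1}\Bigl(r\phi_{0rr}^2+\tfrac{\phi_{0r}^2}{r}+r\phi_{1r}^2+\tfrac{\phi_1^2}{r}\Bigr)\,dr\le\eps^2 ,
\end{equation*}
and by finite speed of propagation Theorem~\ref{1.4} then produces the solution in the backward cone $\{t+r\le 2\delta_1\}$. On the complementary region $r\ge\delta_1$ the weights $r$ and $1/r$ are bounded above and below, so the radial extremal hypersurface equation is a one-dimensional linearly degenerate hyperbolic system and the local theory of Zhou~\cite{Zhou 1996} applies there; one then takes $\delta=\min\{\delta_1/2,\delta_2\}$. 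The profile-dependence of $\delta$ is carried entirely by the choice of $\delta_1$, which is exactly the kind of dependence your scaling argument correctly identifies as unavoidable — that observation is sound.

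The gap is the step you flag as the main obstacle, and it is not a minor verification. The a priori estimate behind Theorem~\ref{1.4} is not a soft energy estimate for a generic quasilinear wave: it is assembled from the exact identities \eqref{ph1}, \eqref{ph2}, \eqref{ph3}, \eqref{ph5}, \eqref{ph6}, \eqref{ph7} and the hand-computed determinantal ``inner products'' of Section~\ref{55}, whose cancellations are tied to the precise coefficients of \eqref{equ}. Once you write $\phi=\psi+v$ with $\psi$ a large smooth background, $v$ solves a different quasilinear equation, with principal symbol evaluated at $\partial\psi+\partial v$ and additional terms from expanding the difference of nonlinear fluxes; multiplying its differentiated form by the analogues of $\phi_{tt}$, $\phi_{tr}+\phi_t/(2r)$, etc., no longer produces perfect divergence identities of the form $f_t+f_r=f^{\mathrm{re}}$, but identities polluted by commutators containing $\partial^2\psi,\partial^3\psi$ paired against $\partial v,\partial^2 v$. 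Your claim that all such contributions are ``linear in $E$ with coefficient $C(\psi)$'' is an assertion, not a calculation: the space--time quantities $\xi_j,\eta_j,\zeta_j,\gamma_j$ that Lemma~\ref{dc} controls are defined through determinants of matrices whose entries come from the \emph{exact} fluxes, and it is precisely the determinantal cancellation that compensates for the failure of $\dot H^2(\mathbb{R}^2)\hookrightarrow W^{1,\infty}$. You would have to exhibit the analogous determinantal structure for the $\psi$-shifted balance laws and show that the extra $\psi$-commutators are absorbed without destroying it — this is new algebra, comparable in length to Sections~\ref{bal}--\ref{55}, and you have not produced it nor given a reason it must exist. (The paper's Section~\ref{88} does carry out a linearized version of this computation, but only at the subcritical $H^1\times L^2$ level for $\partial u/\partial\lambda$, not at the critical $H^2\times H^1$ level your bootstrap requires.) A secondary issue is quantitative: for $C(\psi)$ to be finite you need $\partial^3\psi\in L^\infty$, which for radial data forces $s>4$ rather than the $s\ge 3$ of Theorem~\ref{1.3}, so the regularization class must also be revisited. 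The paper's domain decomposition sidesteps all of this, because near $r=0$ the critical norm is genuinely small and Theorem~\ref{1.4} applies as stated, while away from $r=0$ the problem reduces to a one-dimensional system with its own, independent local theory.
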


\subsection{Outline of the article.} This article is organized as follows:
In section \ref{NP},  we introduce some notations and  preliminaries, which make more convenient to prove in the later section. In section \ref{bal}, we establish some balance law (i,e. $f_t + f_r = f^{re}$) associated with energy and ``momentum''. In section \ref{44}, we will derive a new div-curl lemma, which palys a key role to get the regularity estimates of solution.
In section \ref{55}, we note that ``div-curl'' structure is an ``inner product" in some sense, and calculate these ``inner product" induced by balance law. In section \ref{ei}, we use the new  div-curl lemma and continuous induction method to know the $\|\phi_t\|_{L^{\infty}_{t,x}} + 	\|\phi_r\|_{L^{\infty}_{t,x}}$ is in fact sufficient small, which guarantees the equation \eqref{equ} make sense. Moreover, based this fact, we can obtain the energy of the second derivative (critical energy) is smallness. In section \ref{77}, we use the new div-curl lemma to show the energy of the third derivative is uniformly bounded, which helps us to obatin the global existence for the classical solution to the equation \ref{equs}. In section \ref{88}, we use the  new div-curl lemma and homotopy method to prove the stability of solution for the system \eqref{equs}, which is useful to get the uniqueness of solution. In section \ref{99}, we combine the results in section \ref{ei}, section \ref{77}, and section \ref{88} to prove the theorem \ref{1.3}, theorem \ref{1.4}, and theorem \ref{1.5}.

\section{Notations and Preliminaries}\label{NP}
In this section, we will make some notations and give some preliminaries.

We mark $ A \lesssim B $ to mean there exists a constant $ C > 0 $ such that $ A \leqslant C B $. We indicate dependence on parameters via subscripts, e.g. $ A \lesssim_{x} B $ indicates $ A \leqslant CB $ for some $ C = C(x) > 0 $.

For convenience, let us define the following quantity:
\begin{equation*}
E_1\left(t\right):=E_1\left(\phi\left(t\right)\right):=\int \frac{\phi^2_{t}+\phi^2_{r}}{r^2}r dr,
\end{equation*}

\begin{equation*}
\hat{E}_1\left(t\right):=\hat{E}_1\left(\phi\left(t\right)\right):=\int\frac{1}{2}\frac{\phi^2_{r}+\phi^2_{t}}{r^2\Delta^{1/2}}r dr,
\end{equation*}

\begin{equation*}
E_2\left(t\right):=E_2\left(\phi\left(t\right)\right):=\int \left(\phi^2_{tt}+\phi^2_{tr}+\phi^2_{rr}\right) rdr,
\end{equation*}

\begin{equation*}
\hat{E}_2\left(t\right):=\hat{E}\left(\phi\left(t\right)\right):=\int\frac{1}{2}\frac{\phi^2_{tr}\left(1-\phi^2_{t}\right)+\phi^2_{tt}\left(1+\phi^2_{r}\right)}{\Delta^{3/2}} r dr,
\end{equation*}

\begin{equation*}
E_3\left(t\right):=E_3\left(\phi\left(t\right)\right):=\int \left(\phi^2_{ttt}+\phi^2_{ttr}+\phi^2_{trr}+ \frac{\phi^2_{tr}}{r^2} +\phi^2_{rrr}+\left( \frac{\phi_{rr}}{r}-\frac{\phi_{r}}{r^2}\right)^2  \right)r dr,
\end{equation*}

\begin{equation*}
E_{3,q}\left(t\right):=E_{3,q}\left(\phi\left(t\right)\right):= \int \left(\phi^2_{ttt} + \phi^2_{ttr}\right) rdr ,
\end{equation*}

\begin{equation*}
E_{3,s}\left(t\right):=E_{3,s}\left(\phi\left(t\right)\right):= \int \left(\phi^2_{trr}+ \frac{\phi^2_{tr}}{r^2}   \right)r dr,
\end{equation*}

\begin{equation*}
E_{3,l}\left(t\right):=E_{3,l}\left(\phi\left(t\right)\right):= \int \left(\phi^2_{rrr}+\left( \frac{\phi_{rr}}{r}-\frac{\phi_{r}}{r^2}\right)^2  \right)r dr,
\end{equation*}

\begin{equation*}
\begin{aligned}
\hat{E}_3\left(t\right):=\hat{E}_3\left(\phi\left(t\right)\right):&=\int \frac{1}{2}\frac{r\phi^2_{ttt}\left(1+\phi^2_{r}\right)
	+r\phi^2_{ttr}\left(1-\phi^2_{t}\right)}{\Delta^{3/2}} dr\\
&+\int r\phi_{ttr}\phi_{tr}\left(\frac{1-\phi^2_{t}}{\Delta^{3/2}}\right)_t +r\phi_{ttr}\phi_{tt}\left(\frac{\phi_{r}\phi_{t}}{\Delta^{3/2}}\right)_t dr,
\end{aligned}
\end{equation*}

\begin{equation*}
\tilde{E}_3\left(t\right):=\tilde{E}_3\left(\phi\left(t\right)\right):=\int \frac{1}{2}\frac{r\phi^2_{ttt}\left(1+\phi^2_{r}\right)+r\phi^2_{ttr}\left(1-\phi^2_{t}\right)}{\Delta^{3/2}} dr,
\end{equation*}

\begin{equation*}
M\left(t\right):=M\left(\phi\left(t\right)\right):=\int_{0}^{t}\int \frac{\phi^2_{t}+\phi^2_{r}}{r^2} dr ds,
\end{equation*}

\begin{equation*}
M_0\left(t\right):=M_0\left(\phi\left(t\right)\right):=\int_{0}^{t}\int \phi^2_{tt}\phi^2_{r}+\phi^2_{tr}\phi^2_{t}+\phi^2_{tt}\phi^2_{t}+\phi^2_{tr}\phi^2_{r} drds,
\end{equation*}

\begin{equation*}
M_h\left(t\right):=:M_h\left(\phi\left(t\right)\right):=\int_{0}^{t}\int \phi^2_{ttt}\phi^2_{r}+\phi^2_{ttr}\phi^2_{t}+\phi^2_{ttt}\phi^2_{t}+\phi^2_{ttr}\phi^2_{r} drds,
\end{equation*}

\begin{equation*}
M_{0,1}\left(t\right):=M_{0,1}\left(\phi\left(t\right)\right):=\int_{0}^{t}\int\phi^2_{tt}\phi^2_{t}+\phi^2_{tr}\phi^2_{t} drds,
\end{equation*}

\begin{equation*}
M_{0,2}\left(t\right):=M_{0,2}\left(\phi\left(t\right)\right):=\int_{0}^{t}\int \phi^2_{tt}\phi^2_{r} + \phi^2_{tr}\phi^2_{r} drds,
\end{equation*}

\begin{equation*}
M_{h,1}\left(t\right):=M_{h,1}\left(\phi\left(t\right)\right):=\int_{0}^{t}\int\phi^2_{ttt}\phi^2_{t}+\phi^2_{ttr}\phi^2_{t} drds,
\end{equation*}

\begin{equation*}
M_{h,2}\left(t\right):=M_{h,2}\left(\phi\left(t\right)\right):=\int_{0}^{t}\int \phi^2_{ttt}\phi^2_{r} + \phi^2_{ttr}\phi^2_{r} drds,
\end{equation*}

\begin{equation*}
M_1\left(t\right):=M_1\left(\phi\left(t\right)\right):=\int_{0}^{t}\int\frac{1}{4}\frac{\phi^2_{t}}{r^2\Delta^{1/2}} drds,
\end{equation*}

\begin{equation*}
M_2\left(t\right):=M_2\left(\phi\left(t\right)\right):=\int_{0}^{t}\int\frac{3}{4}\frac{\phi^2_{r}}{r^2\Delta^{1/2}}drds ,
\end{equation*}

\begin{equation*}
\eta_1\left(t\right):=\eta_1\left(\phi\left(t\right)\right):=\int^t_0\int det A_m\left(\phi\left(t\right)\right)
drds,
\end{equation*}

\begin{equation*}
\eta_2\left(t\right):=\eta_2\left(\phi\left(t\right)\right):=\int^t_0\int det A\left(\phi\left(t\right)\right)
drds,
\end{equation*}

\begin{equation*}
\xi_1\left(t\right):=\xi_1\left(\phi\left(t\right)\right):=\int^t_0\int det B_m\left(\phi\left(t\right)\right) drds,
\end{equation*}

\begin{equation*}
\xi_2\left(t\right):=\xi_2\left(\phi\left(t\right)\right):=\int^t_0\int det B\left(\phi\left(t\right)\right) drds,
\end{equation*}

\begin{equation*}
\zeta_1\left(t\right):=\zeta_1\left(\phi\left(t\right)\right):=\int^t_0\int det C_m\left(\phi\left(t\right)\right) drds,
\end{equation*}

\begin{equation*}
\zeta_2\left(t\right):=\zeta_2\left(\phi\left(t\right)\right):=\int^t_0\int det C\left(\phi\left(t\right)\right) drds,
\end{equation*}

\begin{equation*}
\gamma_1\left(t\right):=\gamma_1\left(\phi\left(t\right)\right):=\int^t_0\int det D_m\left(\phi\left(t\right)\right) drds,
\end{equation*}

\begin{equation*}
\gamma_2\left(t\right):=\gamma_2\left(\phi\left(t\right)\right):=\int^t_0\int det D\left(\phi\left(t\right)\right) drds,
\end{equation*}
where the $det A_m\left(\phi\left(t\right)\right)$, $det A\left(\phi\left(t\right)\right)$, $det B_m\left(\phi\left(t\right)\right)$, $det B\left(\phi\left(t\right)\right)$, $det C_m\left(\phi\left(t\right)\right)$, $det C\left(\phi\left(t\right)\right)$, $det D_m\left(\phi\left(t\right)\right)$, $det D\left(\phi\left(t\right)\right)$ are defined in section
\ref{55}.

\section{law of equilibrium}\label{bal}

In this section, We're going to  multiply both sides of the original equation \eqref{equ} or the derivative (second derivative) of the original equation \eqref{1-der}, \eqref{1'-der}, \eqref{2-der}  by the corresponding multipliers to get some equalities that associate with energy and ``momentum", and then write those equalities in terms of the balance law, i,e. $f_t + f_r = f^{re}$.

Let us multiply both sides of \eqref{equ} by $\frac{\phi_{t}}{r^2}$, we can obtain:
\begin{equation}
\frac{\phi_{t}}{r^2}\left[\left(\frac{r\phi_{t}}{\Delta^{1/2}}\right)_t-\left(\frac{r\phi_{r}}{\Delta^{1/2}}\right)_r\right]=0.
\end{equation}

Then we know:
\begin{equation}\label{ph1}
	\frac{1}{2}\left(\frac{\phi^2_{r}+\phi^2_{t}}{r\Delta^{1/2}}\right)_t-\left(\frac{\phi_{t}\phi_{r}}{r\Delta^{1/2}}\right)_r=Q^1,
\end{equation}
where
\begin{equation*}
	Q^1=-\frac{1}{2}\phi^2_{t}\left(\frac{1}{r\Delta^{1/2}}\right)_t+\frac{1}{2}\phi^2_r\left(\frac{1}{r\Delta^{1/2}}\right)_t+\frac{2\phi_{t}\phi_{r}}{r^2\Delta^{1/2}}.
\end{equation*}

Similarly, we multiply both sides of \eqref{equ} by $\frac{\phi_{r}}{r^2}$, we have:
\begin{equation}
\frac{\phi_{r}}{r^2}\left[\left(\frac{r\phi_{t}}{\Delta^{1/2}}\right)_t-\left(\frac{r\phi_{r}}{\Delta^{1/2}}\right)_r\right]=0.
\end{equation}

It is not difficult to obtain:
\begin{equation}\label{ph2}
\left(\frac{\phi_r \phi_{t}}{r\Delta^{1/2}}\right)_t -\frac{1}{2} \left(\frac{\phi^2_{t}+\phi^2_{r}}{r\Delta^{1/2}}\right)_r=Q^2,
\end{equation}
where 
\begin{equation*}
Q^2=-\frac{1}{2}\phi^2_{t}\left(\frac{1}{r\Delta^{1/2}}\right)_r+\frac{1}{2}\phi^2_r\left(\frac{1}{r\Delta^{1/2}}\right)_r+\frac{2\phi^2_{r}}{r^2\Delta^{1/2}}.
\end{equation*}

Taking the first derivative of the equation \eqref{equ} with respect to time, we can get:
\begin{equation}\label{1-der}
\left(\frac{r \phi_{tt}\left(1+\phi^2_r\right)-r \phi_t\phi_r\phi_{tr}}{\Delta^{3/2} }\right)_t - \left(\frac{r \phi_{tr}\left( 1-\phi^2_t\right) + r \phi_r\phi_t\phi_{tt}}{\Delta^{3/2}}\right)_r = 0 .
\end{equation}

We can rewrite \eqref{1-der} as:
\begin{equation}\label{1-derr}
	r\left(\frac{\phi_{tt}\left(1+\phi^2_{r}\right)-\phi_{t}\phi_{r}\phi_{tr}}{\Delta^{3/2}}\right)_t-r\left(\frac{\phi_{tr}\left(1-\phi^2_{t}\right)+\phi_{r}\phi_{t}\phi_{tt}}{\Delta^{3/2}}\right)_r-\left(\frac{\phi_{r}}{\Delta^{1/2}}\right)_t=0.
\end{equation}

Similarly, taking the the first derivative of the equation \eqref{equ} with respect to space, we can get:
\begin{equation}\label{1'-der}
\left(\frac{r \phi_{tr}\left(1+\phi^2_r\right)-r \phi_t\phi_r\phi_{rr} +\phi_{t}\Delta}{\Delta^{3/2} }\right)_t - \left(\frac{r \phi_{rr}\left( 1-\phi^2_t\right) + r \phi_r\phi_t\phi_{tr}+\phi_{r}\Delta}{\Delta^{3/2}}\right)_r = 0 .
\end{equation}

We can rewrite \eqref{1'-der} as:
\begin{equation}\label{1'-derr}
	r\left(\frac{\phi_{tr}\left(1+\phi^2_{r}\right)-\phi_{t}\phi_{r}\phi_{rr}}{\Delta^{3/2}}\right)_t-r\left(\frac{\phi_{rr}\left(1-\phi^2_{t}\right)+\phi_{r}\phi_{t}\phi_{tr}}{\Delta^{3/2}}\right)_r-\left(\frac{\phi_{r}}{\Delta^{1/2}}\right)_r+\frac{\phi_{r}}{r\Delta^{1/2}}=0.
\end{equation}

Let us multiply both sides of \eqref{1-der} by $\phi_{tt}$, we can obtain:
\begin{equation}
    \phi_{tt}\left[\left( \frac{r \phi_{tt}\left(1+\phi^2_r\right)-r \phi_t\phi_r\phi_{tr}}{\Delta^{3/2} }\right)_t - \left(\frac{r \phi_{tr}\left( 1-\phi^2_t\right) + r \phi_r\phi_t\phi_{tt}}{\Delta^{3/2}}\right)_r\right] = 0 .
\end{equation}

By some  simple calculation, it is not difficult to see the following equality:
\begin{equation}\label{ph3}
\begin{aligned}
	\frac{1}{2} \left(\frac{r\phi^2_{tt}\left(1+\phi^2_r\right)+r\phi^2_{tr}\left(1-\phi^2_t\right)}{\Delta^{3/2}}\right)_t-\left( \frac{r\phi_{tt}\phi_{tr}\left(1-\phi^2_t\right)+r\phi_r\phi_t\phi^2_{tt}}{\Delta^{3/2}}\right)_r=W^1,
\end{aligned}
\end{equation}
where
\begin{equation*}
	W^1=-\frac{1}{2}\phi^2_{tt}\left(\frac{r\left(1+\phi^2_r\right)}{\Delta^{3/2}}\right)_t+\phi_{tt}\phi_{tr}\left(\frac{r\phi_t\phi_r}{\Delta^{3/2}}\right)_t+\frac{1}{2}\phi^2_{tr}\left(\frac{r\left(1-\phi^2_t\right)}{\Delta^{3/2}}\right)_t.
\end{equation*}

In fact, 
\begin{equation}
\begin{aligned}
W^1&=\left(\frac{r}{\Delta^{3/2}}\right)_t\frac{1}{2}\left(\phi^2_{tr}\left(1-\phi^2_t\right)+2\phi_{tt}\phi_{tr}\phi_t\phi_r-\phi^2_{tt}\left(1+\phi^2_r\right)\right)\\
&+\left(\frac{r}{\Delta^{3/2}} \right)\left(-\phi_r\phi_{tr}\phi^2_{tt}+\phi_{tt}\phi_{tr}\left(\phi_t\phi_r\right)_t -\phi^2
_{tr}\phi_t\phi_{tt}\right)\\
&=\left(\frac{r}{\Delta^{3/2}}\right)_t\left(\phi^2_{tr}\left(1-\phi^2_t\right)+2\phi_{tt}\phi_{tr}\phi_t\phi_r-\phi^2_{tt}\left(1+\phi^2_r\right)\right) .
\end{aligned}
\end{equation}

Let us multiply both sides of \eqref{1-derr} by $\left(\phi_{tr}+ \frac{\phi_t}{2r}\right)$, we can obtain:
\begin{equation}
	\begin{aligned}
	&\left(\phi_{tr}+ \frac{\phi_t}{2r}\right)\left[r\left(\frac{\phi_{tt}\left(1+\phi^2_{r}\right)-\phi_{t}\phi_{r}\phi_{tr}}{\Delta^{3/2}}\right)_t-r\left(\frac{\phi_{tr}\left(1-\phi^2_{t}\right)+\phi_{r}\phi_{t}\phi_{tt}}{\Delta^{3/2}}\right)_r\right]\\
	&-\left(\phi_{tr}+\frac{\phi_{t}}{2r}\right)\left(\frac{\phi_{r}}{\Delta^{1/2}}\right)_t=0.
	\end{aligned}
\end{equation}

According to a simple calculation, it is not difficult to see following equality:
\begin{equation}\label{ph5}
	\begin{aligned}
	&\left(\frac{r\phi_{tt}\phi_{tr}\left(1+\phi^2_{r}\right)-r\phi_{t}\phi_{r}\phi^2_{tr}}{\Delta^{3/2}}+\frac{1}{2}\frac{\phi_{t}\phi_{tt}\left(1+\phi^2_{r}\right)-\phi^2_{t}\phi_{r}\phi_{tr}}{\Delta^{3/2}}\right)_t\\
	&-\left(\frac{1}{2}\frac{r\phi^2_{tr}\left(1-\phi^2_{t}\right)+r\phi^2_{tt}\left(1+\phi^2_{r}\right)}{\Delta^{3/2}}+\frac{1}{2}\frac{\phi_{t}\phi_{tr}\left(1-\phi^2_{t}\right)+\phi^2_{t}\phi_{r}\phi_{tt}}{\Delta^{3/2}}+\frac{1}{4}\frac{\phi^2_{t}}{r\Delta^{1/2}}\right)_r\\
	&=P^1,
	\end{aligned}
\end{equation}
where
\begin{equation}
\begin{aligned}
	&P^1=r\left(\frac{1}{\Delta^{3/2}}\right)_r\left(-\frac{\phi^2_{tt}}{2}\left(1+\phi^2_{r}\right)+\phi_{tr}\phi_{tt}\phi_{r}\phi_{t}+\frac{\phi^2_{tr}}{2}\left(1-\phi^2_{t}\right)\right)\\
	&+\left(\frac{r}{\Delta^{3/2}}\right)\left(\phi_{tt}\phi_{r}-\phi_{tr}\phi_{t}\right)\left(\phi^2_{tr}-\phi_{tt}\phi_{rr}\right)\\
	&+\frac{1}{4}\frac{\phi^2_{t}}{r^2\Delta^{1/2}}-\left(\frac{1}{\Delta^{1/2}}\right)_r\frac{\phi^2_{t}}{4r}+\left(\frac{1}{\Delta^{1/2}}\right)_t\frac{\phi_{t}\phi_{r}}{2r}.
\end{aligned}
\end{equation}

Similarly, we multiply both sides of \eqref{1'-derr} by$\left(\phi_{rr}+\frac{\phi_{r}}{2r}\right)$, we have:
\begin{equation}
\begin{aligned}
&\left(\phi_{rr}+ \frac{\phi_r}{2r}\right)\left[r\left(\frac{\phi_{tr}\left(1+\phi^2_{r}\right)-\phi_{t}\phi_{r}\phi_{rr}}{\Delta^{3/2}}\right)_t-r\left(\frac{\phi_{rr}\left(1-\phi^2_{t}\right)+\phi_{r}\phi_{t}\phi_{tr}}{\Delta^{3/2}}\right)_r\right]\\
&-\left(\phi_{rr}+\frac{\phi_{r}}{2r}\right)\left(\frac{\phi_{r}}{\Delta^{1/2}}\right)_r+\left(\phi_{rr}+\frac{\phi_{r}}{2r}\right)\left(\frac{\phi_{r}}{r\Delta^{1/2}}\right)=0.
\end{aligned}
\end{equation}

By some simple calculation, we can get:
\begin{equation}\label{ph6}
\begin{aligned}
&\left(\frac{r\phi_{rr}\phi_{tr}\left(1+\phi^2_{r}\right)-r\phi_{t}\phi_{r}\phi^2_{rr}}{\Delta^{3/2}}+\frac{1}{2}\frac{\phi_{r}\phi_{tr}\left(1+\phi^2_{r}\right)-\phi_{t}\phi^2_{r}\phi_{rr}}{\Delta^{3/2}}\right)_t\\
&-\left(\frac{1}{2}\frac{r\phi^2_{rr}\left(1-\phi^2_{t}\right)+r\phi^2_{tr}\left(1+\phi^2_{r}\right)}{\Delta^{3/2}}+\frac{1}{2}\frac{\phi_{r}\phi_{rr}\left(1-\phi^2_{t}\right)+\phi_{t}\phi^2_{r}\phi_{tr}}{\Delta^{3/2}}+\frac{1}{4}\frac{\phi^2_{r}}{r\Delta^{1/2}}\right)_r\\
&=P^2,
\end{aligned}
\end{equation}
where
\begin{equation}
	\begin{aligned}
	&P^2= r\left(\frac{1}{\Delta^{3/2}}\right)_r\left(-\frac{\phi^2_{tr}}{2}\left(1+\phi^2_{r}\right)+\phi_{tr}\phi_{rr}\phi_{r}\phi_{t}+\frac{\phi^2_{rr}}{2}\left(1-\phi^2_{t}\right)\right)\\
	&+\frac{3}{4}\frac{\phi^2_{r}}{r^2\Delta^{1/2}}+\left(\frac{1}{\Delta^{1/2}}\right)_r\frac{3\phi^2_{r}}{4r}.
	\end{aligned}
\end{equation}

Taking the second derivative of the equation \eqref{equ} with respect to time, we can get:
\begin{equation}\label{2-der}
\left(\frac{r \phi_{tt}\left(1+\phi^2_r\right)-r \phi_t\phi_r\phi_{tr}}{\Delta^{3/2} }\right)_{tt} - \left(\frac{r \phi_{tr}\left( 1-\phi^2_t\right) + r \phi_r\phi_t\phi_{tt}}{\Delta^{3/2}}\right)_{tr} = 0 .
\end{equation}

We multiply both sides of \eqref{2-der} by $\phi_{ttt}$, we have:
\begin{equation}
\phi_{ttt} \left[\left(\frac{r \phi_{tt}\left(1+\phi^2_r\right)-r \phi_t\phi_r\phi_{tr}}{\Delta^{3/2} }\right)_{tt} - \left(\frac{r \phi_{tr}\left( 1-\phi^2_t\right) + r \phi_r\phi_t\phi_{tt}}{\Delta^{3/2}}\right)_{tr} \right]=0.
\end{equation}

By some  simple calculation, it is not difficult to see the following equality:
\begin{equation}\label{ph7}
	\begin{aligned}
	&\left(\frac{1}{2}\frac{r\phi^2_{ttt}\left(1+\phi^2_{r}\right)+r\phi^2_{ttr}\left(1-\phi^2_{t}\right)}{\Delta^{3/2}}+r\phi_{ttr}\phi_{tr}\left(\frac{1-\phi^2_{t}}{\Delta^{3/2}}\right)_t +r\phi_{ttr}\phi_{tt}\left(\frac{\phi_{r}\phi_{t}}{\Delta^{3/2}}\right)_t \right)_t\\
	&-\left(\frac{r\phi_{ttt}\phi_{ttr}\left(1-\phi^2_{t}\right)+r\phi^2_{ttt}\phi_{t}\phi_{r}}{\Delta^{3/2}} + r\phi_{ttt}\phi_{tr}\left(\frac{1-\phi^2_{t}}{\Delta^{3/2}}\right)_t+r\phi_{ttt}\phi_{tt}\left(\frac{\phi_{r}\phi_{t}}{\Delta^{3/2}}\right)_t\right)_r\\
	&= T^1,
	\end{aligned}
\end{equation}
where
\begin{equation*}
	\begin{aligned}
	T^1&=r\left(\frac{1}{\Delta^{3/2}}\right)_{tt}\left(-\phi_{ttt}\phi_{tt}\left(1+\phi^2_{r}\right)+\phi_{ttt}\phi_{tr}\phi_{t}\phi_{r}+\phi_{ttr}\phi_{tt}\phi_{r}\phi_{t}+\phi_{ttr}\phi_{tr}\left(1-\phi^2_{t}\right)\right)\\
	&+r\left(\frac{1}{\Delta^{3/2}}\right)_t\frac{3}{2}\left(-\phi^2_{ttt}\left(1+\phi^2_{r}\right)+2\phi_{ttt}\phi_{ttr}\phi_{t}\phi_{r}+\phi^2_{ttr}\left(1-\phi^2_{t}\right)\right)\\
	&+r\left(\frac{1}{\Delta^{3/2}}\right)_t2\left(\phi_{ttt}\phi_{tr}-\phi_{ttr}\phi_{tt}\right)\left(\phi_{tr}\phi_{t}-\phi_{tt}\phi_{r}\right)\\
	&+r\left(\frac{1}{\Delta^{3/2}}\right)2\left(\phi_{ttr}\phi_{t}-\phi_{ttt}\phi_{r}\right)\left(\phi_{ttt}\phi_{tr}-\phi_{ttr}\phi_{tt}\right).
	\end{aligned}
\end{equation*}

In fact,
\begin{equation}
	\begin{aligned}
	T^1&=r\left(\frac{1}{\Delta^{3/2}}\right)_{tt}\left(-\phi_{ttt}\phi_{tt}\left(1+\phi^2_{r}\right)+\phi_{ttt}\phi_{tr}\phi_{t}\phi_{r}+\phi_{ttr}\phi_{tt}\phi_{r}\phi_{t}+\phi_{ttr}\phi_{tr}\left(1-\phi^2_{t}\right)\right)\\
	&+r\frac{9}{2}\frac{1}{\Delta^{5/2}}\left(\phi_{r}\phi_{ttr}-\phi_{t}\phi_{ttt}\right)\left(\phi_{tr}\phi_{ttr}\left(1-\phi^2_{t}\right)-\phi_{tt}\phi_{ttt}\left(1+\phi^2_{r}\right)+2\phi_{ttt}\phi_{tr}\phi_{r}\phi_{t}\right)\\
	&+r\frac{9}{2}\frac{1}{\Delta^{5/2}}\phi_{t}\phi_{ttr}\left(1-\phi^2_{t}\right)\left(\phi_{ttt}\phi_{tr}-\phi_{ttr}\phi_{tt}\right)\\
	&+r\frac{9}{2}\frac{1}{\Delta^{5/2}}\phi_{r}\phi_{ttt}\left(1+\phi^2_{r}\right)\left(\phi_{ttr}\phi_{tt}-\phi_{ttt}\phi_{tr}\right)\\
	&+2r\frac{9}{2}\frac{1}{\Delta^{5/2}}
	\phi^2_{t}\phi_{r}\phi_{ttt}\left(\phi_{ttt}\phi_{tr}-\phi_{ttr}\phi_{tt}\right)\\
	&+r\left(\frac{1}{\Delta^{3/2}}\right)_t2\left(\phi_{ttt}\phi_{tr}-\phi_{ttr}\phi_{tt}\right)\left(\phi_{tr}\phi_{t}-\phi_{tt}\phi_{r}\right)\\
	&+r\left(\frac{1}{\Delta^{3/2}}\right)2\left(\phi_{ttr}\phi_{t}-\phi_{ttt}\phi_{r}\right)\left(\phi_{ttt}\phi_{tr}-\phi_{ttr}\phi_{tt}\right).
	\end{aligned}
\end{equation}

	


\section{New div-curl type lemma}\label{44}
In this section, we will derive a new div-curl lemma, which is useful for us to get the regularity estimates of solution.

\begin{lemma}\label{dc}
	Suppose that
	\begin{equation}
	\left\{
	\begin{aligned}
	&f_t^{11} + f_r^{12} =G^1\\
	& f_t^{21}-f_r^{22}=G^2
	\end{aligned}
	\right.
	\end{equation}
	
	\begin{equation}
	\begin{aligned}
	f^ {12} \rightarrow 0, r\rightarrow 0.
	\end{aligned}
	\end{equation}
	Then there hold
	\begin{equation}
	\begin{aligned}
	\int_{0}^{T}\int_{0}^{\infty} f^{11}f^{22}+f^{12}f^{21} &\lesssim
	\left(\|f^{11}\left(0\right)\|_{L^1} + \sup\limits_{0\leq t\leq T}  \|f^{11}\left(t\right)\|_{L^1} + \int_{0}^{T}\int_{0}^{\infty}\lvert G^1\rvert \right)\\
	&\cdot\left(\|f^{21}\left(0\right)\|_{L^1} +\sup\limits_{0\leq t\leq T} \|f^{21}\left(t\right)\|_{L^1}  +\int_{0}^{T}\int_{0}^{\infty}\lvert G^2\rvert \right)
	\end{aligned}
	\end{equation}
	provided that the right side is bounded.
\end{lemma}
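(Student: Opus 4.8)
The plan is to exploit the two divergence-form identities to build ``primitives'' in the $r$-variable and then integrate the product against time, the way one proves compensated-compactness / div-curl estimates. First I would define, for each fixed $t$, the antiderivatives
\begin{equation*}
F^{11}(t,r) := \int_0^r f^{11}(t,\rho)\, d\rho, \qquad F^{21}(t,r) := \int_0^r f^{21}(t,\rho)\, d\rho,
\end{equation*}
which are controlled in $L^\infty_r$ by $\|f^{11}(t)\|_{L^1}$ and $\|f^{21}(t)\|_{L^1}$ respectively. The key observation is that the first equation $f^{11}_t + f^{12}_r = G^1$ can be integrated in $\rho$ from $0$ to $r$: using $f^{12}\to 0$ as $r\to 0$, this gives $\partial_t F^{11}(t,r) + f^{12}(t,r) = \int_0^r G^1(t,\rho)\,d\rho$, i.e. $f^{12} = \partial_t F^{11} + (\text{an error bounded by }\int|G^1|)$. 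One would like the analogous statement for the second equation, but there the sign is flipped: $f^{21}_t - f^{22}_r = G^2$, so it is natural instead to write $f^{22}_r = f^{21}_t - G^2$ and keep $f^{22}$ itself, pairing $f^{11}f^{22}$ by an integration by parts in $r$ that moves the derivative off $f^{22}$ onto $F^{11}$ (whose $r$-derivative is $f^{11}$).

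The main computation is then the following. Write
\begin{equation*}
\int_0^T\!\!\int_0^\infty f^{11}f^{22}\, dr\,dt = \int_0^T\!\!\int_0^\infty (\partial_r F^{11}) f^{22}\, dr\,dt = -\int_0^T\!\!\int_0^\infty F^{11}\, \partial_r f^{22}\, dr\,dt + (\text{boundary}),
\end{equation*}
and substitute $\partial_r f^{22} = \partial_t f^{21} - G^2$. The term $-\int F^{11}\partial_t f^{21}$ is integrated by parts in $t$, producing $\int (\partial_t F^{11}) f^{21}$ plus boundary terms at $t=0,T$; and by the integrated first equation $\partial_t F^{11} = f^{12}\cdot(-1) + \int_0^r G^1$ — wait, more precisely $\partial_t F^{11} = -f^{12} + \int_0^r G^1$, so this contributes $-\int f^{12} f^{21}$ (which cancels the $+f^{12}f^{21}$ on the left of the desired inequality, turning the left side into something purely of ``$F^{11}\cdot$boundary'' and ``error'' type) plus $\int (\int_0^r G^1) f^{21}$. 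Every remaining term is then estimated in absolute value: $|F^{11}(t,r)|\le \|f^{11}(t)\|_{L^1}$, $\int_0^\infty |f^{21}(t,r)|\,dr = \|f^{21}(t)\|_{L^1}$, $|\int_0^r G^1(t,\rho)d\rho| \le \int_0^\infty |G^1(t,\rho)|d\rho$, and one bounds $\int_0^T(\cdots)dt$ by $\sup_t(\cdots)\cdot$(the time-integral factor), arriving at a product of the two factors on the right-hand side. I would be somewhat careful about which antiderivative ($F^{11}$ vs. $F^{21}$) to use so that the cross term $+f^{12}f^{21}$ on the left is exactly the one generated and cancelled; the asymmetry between the two hypotheses ($+f_r^{12}$ versus $-f_r^{22}$) is precisely what makes the cancellation work.

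The main obstacle, and the point requiring the most care, is the treatment of the boundary terms — both the spatial boundary at $r=0$ and $r=\infty$, and the temporal boundary at $t=0$ and $t=T$. At $r=0$ we are given $f^{12}\to 0$, and $F^{11}(t,0)=0$ by definition, which should kill the $r=0$ boundary contributions; at $r=\infty$ one needs $F^{11}$ to stay bounded (true, from $f^{11}\in L^1_r$) while $f^{22}$ or $f^{21}$ decays, and in the application these come from energy densities that do decay, so I would either invoke that or note the estimate is a priori for nice functions and the general case follows by the density/approximation built into the notion of strong solution. The temporal boundary terms $\big[\int_0^\infty F^{11} f^{21}\,dr\big]_{t=0}^{t=T}$ are bounded by $\|f^{11}(0)\|_{L^1}\|f^{21}(0)\|_{L^1} + \|f^{11}(T)\|_{L^1}\|f^{21}(T)\|_{L^1}$, which is why both the initial norms and the $\sup_{0\le t\le T}$ norms appear in the statement. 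Assembling all the pieces and using $ab+cd+\cdots \le (a+c+\cdots)(b+d+\cdots)$ for nonnegative reals gives the claimed product bound.

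\smallskip

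Throughout I would keep in mind that this is essentially a one-dimensional (in $r$) statement with $t$ playing the role of a second variable, so no Fourier analysis or Hardy-space machinery is needed — unlike the classical div-curl lemmas cited in the introduction, the gain here is not compactness but the explicit bilinear space-time bound, obtained by the integration-by-parts bookkeeping above.
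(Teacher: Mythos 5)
Your proposal is correct and is essentially the same argument as the paper's: both integrate the first equation in $r$ to produce the primitive $F^{11}=\int_0^r f^{11}$, pair the second equation against $F^{11}$ (your spatial integration by parts), and then absorb the resulting $\partial_t$-terms into a single time derivative of $\int F^{11}f^{21}\,dr$ (your temporal integration by parts), leaving the cross term $f^{12}f^{21}$ to combine with $f^{11}f^{22}$ on the left. The paper organizes this as ``add equation (multiplied by $f^{21}$) and (multiplied by $F^{11}$)'' rather than as two explicit integrations by parts, but the bookkeeping and the resulting bounds $\mathcal{A}_1,\mathcal{A}_2,\mathcal{A}_3$ are identical to yours.
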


\begin{proof}
	We note
	\begin{equation*}
	\left(\int_{0}^{r} f^{11} \right)_t +f^{12}=\int_{0}^{r} G^1.
	\end{equation*}
	
	Then 
	\begin{equation}\label{dd1}
	f^{21}\int_{0}^{r} f_t^{11}+f^{12}f^{21}=\int_{0}^{r}G^1f^{21},
	\end{equation}
	
	\begin{equation}\label{dd2}
	f^{21}_t\int_{0}^{r} f^{11}-f_r^{22}\int_{0}^{r}f^{11}= G^2\int_{0}^{r}f^{11}.
	\end{equation}
    
    \eqref{dd1}+\eqref{dd2}:
	\begin{equation}
	\int_{0}^{\infty} \left(\int_{0}^{r} f^{11}f^{21}\right)_t+\int_{0}^{\infty} 
	\left(f^{12}f^{21}-f_r^{22}\int_{0}^{r}f^{11}\right)=\int_{0}^{\infty}\left(f^{21}\int_{0}^{r}G^1 + G^2\int_{0}^{r}f^{11}\right).
	\end{equation}
	
	We have:
	\begin{equation}
	\begin{aligned}
	\int_{0}^{T}\int_{0}^{\infty} f^{11}f^{22}+f^{12}f^{21}=& \int_{0}^{\infty} \left(\int_{0}^{r} f^{11}f^{21}\right)\left(0\right)- \left(\int_{0}^{r} f^{11}f^{21}\right)\left(T\right) \\
	&+\int_{0}^{T}\int_{0}^{\infty}\left(f^{21}\int_{0}^{r}G^1 + G^2\int_{0}^{r}f^{11}\right)\\
	&:= \mathcal{A}_1+\mathcal{A}_2+\mathcal{A}_3,
	\end{aligned}
	\end{equation}
where	
	\begin{equation*}
	\begin{aligned}
	\lvert \mathcal{A}_1\rvert&\lesssim \|f^{11}\left(0\right)\|_{L^1}\|f^{21}\left(0\right)\|_{L^1} + \|f^{11}\left(T\right)\|_{L^1}\|f^{21}\left(T\right)\|_{L^1},\\
	\end{aligned}
	\end{equation*}
	
	\begin{equation*}
	\begin{aligned}
	\lvert \mathcal{A}_2\rvert&\lesssim\int_{0}^{T} \|f^{21}\left(t\right)\|_{L^1}\|G^1\left(t\right)\|_{L^1}\\
	\lesssim&\sup\limits_{0\leq t\leq T} \| f^{21}\left(t\right)\|_{L^1}\left(\int_{0}^{T}\int_{0}^{\infty}\lvert G^1\rvert\right),
	\end{aligned}
	\end{equation*} 
	
	\begin{equation*}
	\begin{aligned}
	\lvert \mathcal{A}_3\rvert&\lesssim\int_{0}^{T} \|f^{11}\left(t\right)\|_{L^1}\|G^2\left(t\right)\|_{L^1}\\
	\lesssim&\sup\limits_{0\leq t\leq T} \| f^{11}\left(t\right)\|_{L^1}\left(\int_{0}^{T}\int_{0}^{\infty}\lvert G^2\rvert\right).
	\end{aligned}
	\end{equation*}

	Based analysis above, we complete the proof. 
\end{proof}

\begin{remark}\label{neiji}
	In fact, the quantity $f^{11}f^{22}+f^{12}f^{21}$ is an ``inner product" in some sense. But by coincidence, this quantity equals to the determinant of a matrix.  We  can represent the matrix in the following form
	
	\begin{equation*}
	\mathbb{A}=
	\begin{pmatrix}
f^{11} & -f^{12} \\
	f^{21}           & f^{22}
	\end{pmatrix}.
	\end{equation*}
	
\end{remark}

\section{Some calculus from law of equilibrium}\label{55}
In this section, we will calculate some ``inner product''   induced by balance law, which paly a crucial role in the  estimates of regularity in the later sections. 

From the above remark \ref{neiji}, we can calculate some determinant of matrix to obtain some ``inner product'' .

We consider the ``inner product" induced by the law of equilibrium \eqref{ph1} and \eqref{ph5}.
\begin{equation}
A\left(\phi\left(t\right)\right):=
\begin{pmatrix}
a &b \\
c^1+c^2           & d^1+d^2+d^3 
\end{pmatrix},
\end{equation}
where
\begin{equation*}
	a:=\frac{1}{2} \frac{\phi^2_{t}+\phi^2_{r}}{r\Delta^{1/2}},
\end{equation*}

\begin{equation*}
	b:=\frac{\phi_{t}\phi_{r}}{r\Delta^{1/2}},
\end{equation*}

\begin{equation*}
	c^1:=\frac{r\phi_{tt}\phi_{tr}\left(1+\phi^2_{r}\right)-r\phi_{t}\phi_{r}\phi^2_{tr}}{\Delta^{3/2}},
\end{equation*}

\begin{equation*}
	c^2:=\frac{1}{2}\frac{\phi_{t}\phi_{tt}\left(1+\phi^2_{r}\right)-\phi^2_{t}\phi_{r}\phi_{tr}}{\Delta^{3/2}},
\end{equation*}

\begin{equation*}
	d^1:=\frac{1}{2}\frac{r\phi^2_{tr}\left(1-\phi^2_{t}\right)+r\phi^2_{tt}\left(1+\phi^2_{r}\right)}{\Delta^{3/2}},
\end{equation*}

\begin{equation*}
	d^2:=\frac{1}{2}\frac{\phi_{t}\phi_{tr}\left(1-\phi^2_{t}\right)+\phi^2_{t}\phi_{r}\phi_{tt}}{\Delta^{3/2}},
\end{equation*}

\begin{equation*}
	d^3:=\frac{1}{4}\frac{\phi^2_{t}}{r\Delta^{1/2}}.
\end{equation*}


By some simple calcaution, we have:
\begin{equation}
\begin{aligned}
det A\left(\phi\left(t\right)\right)&=ad^1+ad^2+ad^3-bc^1-bc^2\\
&=ad^1-bc^1+ad^2-bc^2+ad^3\\
&=det A_m\left(\phi\left(t\right)\right)\\
&+\frac{1}{4r\Delta^2} \left(\phi^2_{t}+\phi^2_{r}\right)\left(\phi_{t}\phi_{tr}\left(1-\phi^2_{t}\right)+\phi^2_{t}\phi_{r}\phi_{tt}\right)\\
&-\frac{1}{2r\Delta^2}\left(\phi_{t}\phi_{r}\right)\left(\phi_{t}\phi_{tr}\left(1-\phi^2_{t}\right)+\phi^2_{t}\phi_r\phi_{tt}\right)\\
&+\frac{1}{8\Delta}\left(\phi^2_{t}+\phi^2_{r}\right)\frac{\phi^2_t}{r^2},
\end{aligned}
\end{equation}
where
\begin{equation}
\begin{aligned}
det A_m\left(\phi\left(t\right)\right) &:=\frac{1}{4\Delta^2}\left(\phi_{t}\phi_{tt}-\phi_{r}\phi_{tr}\right)^2+\frac{1}{4\Delta^2}\left(\phi_{t}\phi_{tr}-\phi_{r}\phi_{tt}\right)^2\\
&+\frac{1}{4\Delta^2}\left(\phi_{tt}\phi_{r}-\phi_{t}\phi_{tr}\right)^2\left(\phi^2_{t}+\phi^2_{r}\right)\\
&+\frac{1}{2\Delta^2}\phi_{r}\phi_{t}\left(\phi_{tt}\phi_{r}-\phi_{tr}\phi_{t}\right)\left(\phi_{tt}\phi_{t}-\phi_{tr}\phi_{r}\right)\\
&+\frac{1}{2\Delta^2}\left(\phi^2_{t}+\phi^2_{r}\right)\left(\phi_{tt}\phi_{r}-\phi_{tr}\phi_{t}\right)\left(\phi_{tr}\phi_{r}-\phi_{tt}\phi_{t}\right)\\
&-\frac{1}{2\Delta^2}\left(\phi^2_{t}+\phi^2_{r}\right)\left(\phi_{tt}\phi_{r}-\phi_{t}\phi_{tr}\right)^2 .\\
\end{aligned}
\end{equation}

We consider the ``inner product" induced by the law of equilibrium \eqref{ph3} and \eqref{ph5}.
\begin{equation}
B\left(\phi\left(t\right)\right):=
\begin{pmatrix}
a &b \\
c^1+c^2           & d^1+d^2+d^3 

\end{pmatrix},
\end{equation}
where 
\begin{equation*}
	a:=\frac{1}{2}\frac{r\phi^2_{tt}\left(1+\phi^2_r\right)+r\phi^2_{tr}\left(1-\phi^2_t\right)}{\Delta^{3/2}},
\end{equation*}
\begin{equation*}
	b:= \frac{r\phi_{tr}\phi_{tt}\left(1-\phi^2_t\right)+r\phi_r\phi_t\phi^2_{tt}}{\Delta^{3/2}},
\end{equation*}
\begin{equation*}
c^1:=\frac{r\phi_{tt}\phi_{tr}\left(1+\phi^2_{r}\right)-r\phi_{t}\phi_{r}\phi^2_{tr}}{\Delta^{3/2}},
\end{equation*}
\begin{equation*}
c^2:=\frac{1}{2}\frac{\phi_{t}\phi_{tt}\left(1+\phi^2_{r}\right)-\phi^2_{t}\phi_{r}\phi_{tr}}{\Delta^{3/2}},
\end{equation*}
\begin{equation*}
d^1:=\frac{1}{2}\frac{r\phi^2_{tr}\left(1-\phi^2_{t}\right)+r\phi^2_{tt}\left(1+\phi^2_{r}\right)}{\Delta^{3/2}},
\end{equation*}
\begin{equation*}
d^2:=\frac{1}{2}\frac{\phi_{t}\phi_{tr}\left(1-\phi^2_{t}\right)+\phi^2_{t}\phi_{r}\phi_{tt}}{\Delta^{3/2}},
\end{equation*}
\begin{equation*}
d^3:=\frac{1}{4}\frac{\phi^2_{t}}{r\Delta^{1/2}}.
\end{equation*}


By some simple calcaution, we have:
\begin{equation}\label{dc3}
\begin{aligned}
det B\left(\phi\left(t\right)\right) &=ad^1+ad^2+ad^3-bc^1-bc^2\\
&=ad^1-bc^1+ad^2-bc^2+ad^3\\
&=det B_m\left(\phi\left(t\right)\right)\\
&+\frac{1}{8\Delta^2}\left(\phi^2_{t}\phi^2_{tt}\left(1+\phi^2_{r}\right)+\phi^2_{t}\phi^2_{tr}\left(1-\phi^2_t\right)\right)\\
&+\frac{r}{4\Delta^3}\phi^2_{t}\phi_{r}\phi_{tt}\left(\left(1-\phi^2_{t}\right)\phi^2_{tr}-\left(1+\phi^2_{r}\right)\phi^2_{tt}+2\phi_t\phi_{r}\phi_{tt}\phi_{tr}\right)\\
&-\frac{r}{4\Delta^3}\phi^2_{t}\frac{\phi_{t}\phi_{r}\phi_{tr}\phi_{tt}\Delta}{r}\\
&-\frac{r}{\Delta^3}\left(1-\phi^2_{t}\right)\phi_{t}\phi_{tr}\left(\phi^2_{tt}\left(1+\phi^2_{r}\right)-2\phi_{t}\phi_{r}\phi^2_{tt}\phi^2_{tr}-\phi^2_{tr}\left(1-\phi^2_{t}\right)\right),
\end{aligned}
\end{equation}
where
\begin{equation*}
	det B_m \left(\phi\left(t\right)\right):=r^2 \left(\frac{\phi^2_{tr}\left(1-\phi^2_t\right)-\phi^2_{tt}\left(1+\phi^2_r\right)}{2} + \phi_t\phi_r\phi_{tt}\phi_{tr}\right)^2.
\end{equation*}

We consider the ``inner product" induced by the law of equilibrium \eqref{ph2} and \eqref{ph7}.
\begin{equation}
C\left(\phi\left(t\right)\right):=
\begin{pmatrix}
a &b \\
c^1+c^2+c^3           & d^1+d^2+d^3 

\end{pmatrix},
\end{equation}
where
\begin{equation*}
	a:= \frac{\phi_{t}\phi_{r}}{r\Delta^{1/2}},
\end{equation*}

\begin{equation*}
	b:=\frac{1}{2} \frac{\phi^2_{t}+\phi^2_{r}}{r\Delta^{1/2}},
\end{equation*}

\begin{equation*}
   c^1:= \frac{1}{2}\frac{r\phi^2_{ttt}\left(1+\phi^2_{r}\right)+r\phi^2_{ttr}\left(1-\phi^2_{t}\right)}{\Delta^{3/2}},
\end{equation*}

\begin{equation*}
c^2:=r\phi_{ttr}\phi_{tr}\left(\frac{1-\phi^2_{t}}{\Delta^{3/2}}\right)_t,
\end{equation*}

\begin{equation*}
	c^3:= r\phi_{ttr}\phi_{tt}\left(\frac{\phi_{r}\phi_{t}}{\Delta^{3/2}}\right)_t ,
\end{equation*}

\begin{equation*}
d^1:= \frac{r\phi_{ttt}\phi_{ttr}\left(1-\phi^2_{t}\right)+r\phi^2_{ttt}\phi_{t}\phi_{r}}{\Delta^{3/2}},
\end{equation*}

\begin{equation*}
	d^2:=r\phi_{ttt}\phi_{tr}\left(\frac{1-\phi^2_{t}}{\Delta^{3/2}}\right)_t,
\end{equation*}

\begin{equation*}
	d^3:=r\phi_{ttt}\phi_{tt}\left(\frac{\phi_{r}\phi_{t}}{\Delta^{3/2}}\right)_t.
\end{equation*}

By some simple calcaution, we have:
\begin{equation}
\begin{aligned}
det C\left(\phi\left(t\right)\right)&=ad^1+ad^2+ad^3-bc^1-bc^2-b^3\\
&=ad^1-bc^1+ad^2-bc^2+ad^3-bc^3\\
&=det C_m\left(\phi\left(t\right)\right)
+\frac{\phi_{t}\phi_{r}}{\Delta^2}\left(\phi_{ttt}\phi_{tt}\phi_{tt}\phi_{r}-\phi_{ttt}\phi_{tr}\phi_{tt}\phi_{t}\right)\\
&-\frac{\phi^2_{t}+\phi^2_{r}}{2\Delta^2}\left(\phi_{ttr}\phi_{tt}\phi_{tt}\phi_{r}-\phi_{ttr}\phi_{tr}\phi_{tt}\phi_{t}\right)\\
&+\frac{\phi_{t}\phi_{r}}{\Delta^3}\left(\phi_{r}\phi_{tr}-\phi_{t}\phi_{tt}\right)\left(-3\left(1-\phi^2_{t}\right)\phi_{ttt}\phi_{tr}-3\phi_{t}\phi_{r}\phi_{ttt}\phi_{tt}\right)\\
&-\frac{\phi^2_{t}+\phi^2_{r}}{2\Delta^3}\left(\phi_{r}\phi_{tr}-\phi_{t}\phi_{tt}\right)\left(-3\left(1-\phi^2_{t}\right)\phi_{ttr}\phi_{tr}-3\phi_{t}\phi_{r}\phi_{ttr}\phi_{tt}\right),\\
\end{aligned}
\end{equation}
where
\begin{equation*}
\begin{aligned}
detC_m\left(\phi\left(t\right)\right)&:=\frac{1}{4\Delta^2}\left(\phi_{t}\phi_{ttt}-\phi_{r}\phi_{ttr}\right)^2+\frac{1}{4\Delta^2}\left(\phi_{t}\phi_{ttr}-\phi_{r}\phi_{ttt}\right)^2\\
&+\frac{1}{4\Delta^2}\left(\phi^2_{t}+\phi^2_{r}\right)\left(\phi_r\phi_{ttt}-\phi_{t}\phi_{ttr}\right)^2\\
&+\frac{1}{2\Delta^2}\left(\phi^2_{t}+\phi^2_{r}\right)\left(\phi_{ttt}\phi_{r}-\phi_{t}\phi_{ttr}\right)\left(\phi_{ttr}\phi_{r}-\phi_{ttt}\phi_{t}\right)\\
&+\frac{1}{2\Delta^2}\left(\phi^2_{t}+\phi^2_{r}\right)\left(\phi_{ttt}\phi_{r}-\phi_{t}\phi_{ttr}\right)\left(\phi_{ttt}\phi_{t}-\phi_{ttr}\phi_{r}\right)\\
&+\frac{1}{2\Delta^2}\phi^2_{t}\left(\phi_{ttr}\phi_{t}-\phi_{ttt}\phi_{r}\right)\left(\phi_{ttt}\phi_{r}-\phi_{t}\phi_{ttr}\right)\\
&+\frac{1}{2\Delta^2}\phi_{t}\phi_{r}\left(\phi_{ttr}\phi_{r}-\phi_{t}\phi_{ttt}\right)\left(\phi_{ttt}\phi_{r}-\phi_{t}\phi_{ttr}\right).\\
\end{aligned}
\end{equation*}

We consider the ``inner product" induced by the law of equilibrium \eqref{ph5} and \eqref{ph7}.

\begin{equation}
D\left(\phi\left(t\right)\right):=
\begin{pmatrix}
a^1+a^2+a^3 & b^1+b^2+b^3 \\
c^1+c^2            & d^1+d^2+d^3 
\end{pmatrix},
\end{equation}
where
\begin{equation*}
	a^1:=\frac{1}{2}\frac{r\phi^2_{ttt}\left(1+\phi^2_{r}\right)+r\phi^2_{ttr}\left(1-\phi^2_{t}\right)}{\Delta^{3/2}},
\end{equation*}

\begin{equation*}
	a^2:=r\phi_{ttr}\phi_{tr}\left(\frac{1-\phi^2_{t}}{\Delta^{3/2}}\right)_t,
\end{equation*}

\begin{equation*}
a^3:= r\phi_{ttr}\phi_{tt}\left(\frac{\phi_{r}\phi_{t}}{\Delta^{3/2}}\right)_t ,
\end{equation*}

\begin{equation*}
b^1:= \frac{r\phi_{ttt}\phi_{ttr}\left(1-\phi^2_{t}\right)+r\phi^2_{ttt}\phi_{t}\phi_{r}}{\Delta^{3/2}},
\end{equation*}

\begin{equation*}
b^2:=r\phi_{ttt}\phi_{tr}\left(\frac{1-\phi^2_{t}}{\Delta^{3/2}}\right)_t,
\end{equation*}

\begin{equation*}
b^3:=r\phi_{ttt}\phi_{tt}\left(\frac{\phi_{r}\phi_{t}}{\Delta^{3/2}}\right)_t,
\end{equation*}

\begin{equation*}
c^1:=\frac{r\phi_{tt}\phi_{tr}\left(1+\phi^2_{r}\right)-r\phi_{t}\phi_{r}\phi^2_{tr}}{\Delta^{3/2}},
\end{equation*}

\begin{equation*}
c^2:=\frac{1}{2}\frac{\phi_{t}\phi_{tt}\left(1+\phi^2_{r}\right)-\phi^2_{t}\phi_{r}\phi_{tr}}{\Delta^{3/2}},
\end{equation*}

\begin{equation*}
d^1:=\frac{1}{2}\frac{r\phi^2_{tr}\left(1-\phi^2_{t}\right)+r\phi^2_{tt}\left(1+\phi^2_{r}\right)}{\Delta^{3/2}},
\end{equation*}

\begin{equation*}
d^2:=\frac{1}{2}\frac{\phi_{t}\phi_{tr}\left(1-\phi^2_{t}\right)+\phi^2_{t}\phi_{r}\phi_{tt}}{\Delta^{3/2}},
\end{equation*}

\begin{equation*}
d^3:=\frac{1}{4}\frac{\phi^2_{t}}{r\Delta^{1/2}}.
\end{equation*}

By some simple calcaution, we have:
\begin{equation}\label{zch}
\begin{aligned}
&det D \left(\phi\left(t\right)\right)= a^1d^1-b^1c^1+a^1d^2+a^1d^3-c^1b^2-c^1b^3+a^2d^1+a^3d^1-b^1c^2\\
&+\left(a^2+a^3\right)\left(d^2+d^3\right)-\left(b^2+b^3\right)c^2\\
&= det D_m\left(\phi \left(t\right)\right)\\
&+\frac{r}{4\Delta^3}\phi_{t}\phi_{ttt}\left(1-\phi^2_{t}\right)\left(1+\phi^2_{r}\right)\left(\phi_{ttt}\phi_{tr}-\phi_{ttr}\phi_{tt}\right)\\
&+\frac{r}{4\Delta^3}\phi_{t}\phi_{ttr}\left(1-\phi^2_{t}\right)\left(\phi_{ttr}\phi_{tr}\left(1-\phi^2_{t}\right)-\phi_{ttt}\phi_{tt}\left(1+\phi^2_{r}\right)+2\phi_{ttr}\phi_{tr}\phi_{t}\phi_{r}\right)\\
&+\frac{r}{4\Delta^3}\phi^2_{t}\phi_{r}\phi_{ttr}\left(1-\phi^2_{t}\right)\left(\phi_{ttr}\phi_{tt}-\phi_{ttt}\phi_{tr}\right)\\
&+\frac{r}{4\Delta^3}\phi^2_{t}\phi_{r}\phi_{ttt}\left(\phi_{ttr}\phi_{tr}\left(1-\phi^2_{t}\right)-\phi_{ttt}\phi_{tt}\left(1+\phi^2_{r}\right)+2\phi_{ttt}\phi_{tr}\phi_{t}\phi_{r}\right)\\
&+\frac{r^2}{2\Delta^3}\phi_{tt}\phi_{tr}\left(\phi_{tt}\phi_{r}-\phi_{tr}\phi_{t}\right)\left(\phi_{ttr}\phi_{tr}\left(1-\phi^2_{t}\right)-\phi_{ttt}\phi_{tt}\left(1+\phi^2_{r}\right)+2\phi_{ttt}\phi_{tr}\phi_{t}\phi_{r}\right)\\
&+\frac{r^2}{2\Delta^3}\phi^2_{tt}\left(1+\phi^2_{r}\right)\left(\phi_{tt}\phi_{r}-\phi_{tr}\phi_{t}\right)\left(\phi_{ttr}\phi_{tt}-\phi_{ttt}\phi_{tr}\right)\\
&-\frac{3r^2}{2\Delta^4}\phi^2_{tr}\left(1-\phi^2_{t}\right)\left(\phi_{tr}\phi_{r}-\phi_{t}\phi_{tt}\right)\left(\phi_{ttr}\phi_{tr}\left(1-\phi^2_{t}\right)-\phi_{ttt}\phi_{tt}\left(1+\phi^2_{r}\right)+2\phi_{ttt}\phi_{tr}\phi_{t}\phi_{r}\right)\\
&-\frac{3r^2}{2\Delta^4}\phi_{tt}\phi_{tr}\left(1+\phi^2_{r}\right)\left(1-\phi^2_{t}\right)\left(\phi_{tr}\phi_{r}-\phi_{t}\phi_{tt}\right)\left(\phi_{ttr}\phi_{tt}-\phi_{tr}\phi_{ttt}\right)\\
&-\frac{3r^2}{2\Delta^4}\phi_{tt}\phi_{tr}\phi_{t}\phi_{r}\left(\phi_{tr}\phi_{r}-\phi_{t}\phi_{tt}\right)\left(\phi_{ttr}\phi_{tr}\left(1-\phi^2_{t}\right)-\phi_{ttt}\phi_{tt}\left(1+\phi^2_{r}\right)+2\phi_{ttt}\phi_{tr}\phi_{t}\phi_{r}\right)\\
&-\frac{3r^2}{2\Delta^4}\phi^2_{tt}\phi_{t}\phi_{r}\left(\phi_{r}\phi_{tr}-\phi_{t}\phi_{tt}\right)\left(1+\phi^2_{r}\right)\left(\phi_{ttr}\phi_{tt}-\phi_{tr}\phi_{ttt}\right)\\
&+\frac{r}{2\Delta^3}\left(\phi_{tt}\phi_{r}-\phi_{tr}\phi_{t}\right)\phi_{t}\phi_{tt}\left(\phi_{ttr}\phi_{tr}\left(1-\phi^2_{t}\right)-\phi_{ttt}\phi_{tt}\left(1+\phi^2_{r}\right)+2\phi_{ttt}\phi_{tr}\phi_{t}\phi_{r}\right)\\
&+\frac{r}{2\Delta^3}\left(\phi_{tt}\phi_{r}-\phi_{tr}\phi_{t}\right)\phi^2_{t}\phi_{r}\phi_{tt}\left(\phi_{tr}\phi_{ttt}-\phi_{tt}\phi_{ttr}\right)\\
&-\frac{3r}{2\Delta^4}\left(\phi_{tt}\phi_{r}-\phi_{tr}\phi_{t}\right)\phi_{tr}\phi_{t}\left(\phi_{ttr}\phi_{tr}\left(1-\phi^2_{t}\right)-\phi_{ttt}\phi_{tt}\left(1+\phi^2_{r}\right)+2\phi_{ttt}\phi_{tr}\phi_{t}\phi_{r}\right)\\
&-\frac{3r}{2\Delta^4}\left(\phi_{tt}\phi_{r}-\phi_{tr}\phi_{t}\right)\phi_{tr}\phi^2_{t}\phi_{r}\left(1-\phi^2_{t}\right)\left(\phi_{ttt}\phi_{tr}-\phi_{ttr}\phi_{tt}\right)\\
&-\frac{3r}{2\Delta^4}\left(\phi_{tt}\phi_{r}-\phi_{tr}\phi_{t}\right)\phi_{tt}\phi^2_{t}\phi_{r}\left(1-\phi^2_{t}\right)\left(\phi_{ttt}\phi_{tr}\left(1-\phi^2_{t}\right)-\phi_{ttt}\phi_{tt}\left(1+\phi^2_{r}\right)+2\phi_{ttt}\phi_{tr}\phi_{t}\phi_{r}\right)\\
&-\frac{3r}{4\Delta^4}\left(\phi_{tt}\phi_{r}-\phi_{tr}\phi_{t}\right)\phi_{tt}\phi^3_{t}\phi^2_{r}\left(\phi_{ttr}\phi_{tt}-\phi_{tr}\phi_{ttt}\right)\\
&+\frac{1}{8\Delta^2}\left(\phi^2_{t}\phi^2_{ttt}\left(1+\phi^2_{r}\right)+\phi^2_{t}\phi^2_{ttr}\left(1-\phi^2_{t}\right)\right)\\
&-\frac{1}{2\Delta^2}\phi^3_{t}\phi_{ttt}\phi_{tr}\\
&-\frac{3}{4\Delta^3}\phi_{ttr}\phi_{tr}\left(1-\phi^2_{t}\right)\phi^2_{t}\left(\phi_{tr}\phi_{r}-\phi_{t}\phi_{tt}\right)\\
&+\frac{1}{4\Delta^2}\phi^3_{t}\phi_{ttr}\phi_{tt}\phi_{tr}
+\frac{1}{4\Delta^2}\phi^2_{t}\phi_{r}\phi_{ttr}\phi^2_{tt}\\
&-\frac{3}{4\Delta^3}\phi^3_{t}\phi_{r}\phi_{ttr}\phi_{tt}\left(\phi_{r}\phi_{tr}-\phi_{t}\phi_{tt}\right),
\end{aligned}
\end{equation}
where
\begin{equation*}
	\begin{aligned}
	det D_m\left(\phi\left(t\right)\right) :=& \frac{r^2}{4\Delta^3}\left(1+\phi^2_{r}\right)\left(1-\phi^2_{t}\right)\left(\phi_{ttt}\phi_{tr}-\phi_{tt}\phi_{ttr}\right)^2\\
	&+\frac{r^2}{4\Delta^3}\left(\phi_{ttr}\phi_{tr}\left(1-\phi^2_{t}\right)-\phi_{ttt}\phi_{tt}\left(1+\phi^2_{r}\right)+2\phi_{ttt}\phi_{tr}\phi_{t}\phi_{r}\right).
	\end{aligned}
\end{equation*}

\section{ Energy induction}\label{ei}

In this section, we use the div-curl lemma \ref{dc} to  establish the  estimates for these  quantities $E_1\left(t\right)$, $E_2\left(t\right)$, $M_0\left(t\right)$, $M\left(t\right)$, $\xi_1\left(t\right)$, $\xi_2\left(t\right)$, $\eta_1\left(t\right)$ and $\eta_2\left(t\right)$ under some assumption as follow:
\begin{equation}\label{as}
	\|\phi_t\|_{L^{\infty}_{t,x}} + 	\|\phi_r\|_{L^{\infty}_{t,x}}\leq \sqrt{\eps}.
\end{equation}


We note that
\begin{equation}\label{tr}
	\begin{aligned}
	&\left(\phi_{tt}\phi_{rr}-\phi^2_{tr}\right)\\
	&=\frac{1}{1-\phi^2_{t}}\left(\left(1+\phi^2_{r}\right)\phi^2_{tt}-2\phi_{t
	}\phi_{r}\phi_{tt}\phi_{tr}-\left(1-\phi^2_{t}\right)\phi^2_{tr}\right)-\frac{\phi_{r}\phi_{tt}}{r}\frac{\Delta}{1-\phi^2_{t}}\\
	&=\frac{1}{1+\phi^2_{r}}\left(\left(1-\phi^2_{t}\right)\phi^2_{rr}+2\phi_{t}\phi_{r}\phi_{tr}\phi_{rr}-\left(1+\phi^2_{r}\right)\phi^2_{tr}\right)-\frac{\phi_{r}\phi_{rr}}{r}\frac{\Delta}{1+\phi^2_{r}}.
	\end{aligned}
\end{equation}

 Noting that 
\begin{equation}\label{bt1}
	E_1\left(t\right)\leq 2 \hat{E}_1\left(t\right),
\end{equation} 
 and using \eqref{ph1} , we have:
 \begin{equation}\label{bt2}
 \begin{aligned}
 \hat{E}_1\left(t\right)&=\hat{E}_1\left(0\right) + \int_{0}^{t}\int\lvert Q^1\rvert drdt\\
 &\leq\hat{E}_1\left(0\right)+2\left(\|\phi_t\|_{L^{\infty}_{t,x}} + 	\|\phi_r\|_{L^{\infty}_{t,x}}\right)M^{1/2}\left(t\right)\eta_1^{1/2}\left(t\right) +4M\left(t\right)\\
 &\leq\eps^2 +4\eps^{1/2}M^{1/2}\left(t\right)\eta_1^{1/2}\left(t\right) +4M\left(t\right).
 \end{aligned}
 \end{equation}

 Noting that 
\begin{equation}\label{bt3}
E_2\left(t\right)\leq 2 \hat{E}_1\left(t\right) + 2 \hat{E}_2\left(t\right),
\end{equation} 
and using \eqref{ph3}, we have:
\begin{equation}\label{bt4}
\begin{aligned}
	\hat{E}_2\left(t\right)&\leq \hat{E}_2\left(0\right) + \int_{0}^{t}\int\lvert W^1\rvert drdt \\
	&\leq\hat{E}_2\left(0\right)+2M_0^{1/2}\left(t\right)\xi_1^{1/2}\left(t\right) \\
		&\leq\eps^2 +2M_0^{1/2}\left(t\right)\xi_1^{1/2}\left(t\right)
\end{aligned}
\end{equation}

Noting that
\begin{equation}\label{bt5}
M\left(t\right)\leq 8M_1\left(t\right)+ 8M_2\left(t\right),
\end{equation} 
and using \eqref{ph5} , \eqref{ph6} and  \eqref{tr} , we have:
\begin{equation}
	\begin{aligned}
	M_1\left(t\right)\leq& \int_{0}^{t}\int P^1 drdt +8M^{1/2}_0\left(t\right)\xi^{1/2}_1\left(t\right)+8M^{1/2}\left(t\right)\xi^{1/2}_1\left(t\right)+16\eta^{1/2}_1\left(t\right)\xi^{1/2}_1\left(t\right)\\
	&+16\eta_1^{1/2}\left(t\right)M^{1/2}_0\left(t\right)+2\eta^{1/2}_1M^{1/2}\left(t\right)+\eps M_0^{1/2}\left(t\right)M^{1/2}\left(t\right)+\eps M\left(t\right)\\
	&\leq 2E_2\left(t\right)+2E^{1/2}_1\left(t\right)E^{1/2}_2\left(t\right)+2E_2\left(0\right)+2E^{1/2}_1\left(0\right)E^{1/2}_2\left(0\right)\\
	&+8M^{1/2}_0\left(t\right)\xi^{1/2}_1\left(t\right)+8M^{1/2}\left(t\right)\xi^{1/2}_1\left(t\right)\\
	&+16\eta^{1/2}_1\left(t\right)\xi^{1/2}_1\left(t\right)+16\eta_1^{1/2}\left(t\right)M^{1/2}_0\left(t\right)+2\eta^{1/2}_1M^{1/2}\left(t\right)\\&+\eps M_0^{1/2}\left(t\right)M^{1/2}\left(t\right)+\eps M\left(t\right)\\
	&\leq 4\eps^2+2E_2\left(t\right)+2E^{1/2}_1\left(t\right)E^{1/2}_2\left(t\right)\\ &+8M^{1/2}_0\left(t\right)\xi^{1/2}_1\left(t\right)+8M^{1/2}\left(t\right)\xi^{1/2}_1\left(t\right)+16\eta^{1/2}_1\left(t\right)\xi^{1/2}_1\left(t\right)\\
	&+16\eta_1^{1/2}\left(t\right)M^{1/2}_0\left(t\right)+2\eta^{1/2}_1M^{1/2}\left(t\right)+\eps M_0^{1/2}\left(t\right)M^{1/2}\left(t\right)+\eps M\left(t\right),\\
	\end{aligned}
\end{equation}

\begin{equation}
	\begin{aligned}
	M_2\left(t\right)&\leq\int_{0}^{t}\int P^2 drdt + 2\eps M^{1/2}\left(t\right)+2\eps M^{1/2}\left(t\right)+\eps^{1/2}M\left(t\right)\\
	&+2M^{1/2}_0\left(t\right)\xi_1^{1/2}\left(t\right)+2M^{1/2}_0\left(t\right)\xi_1^{1/2}\left(t\right)+M^{1/2}\left(t\right)\xi^{1/2}_1\left(t\right)\\
	&+2M_0\left(t\right)+\eps M_0^{1/2}\left(t\right)M^{1/2}\left(t\right)+2M_0\left(t\right)+2\eps M\left(t\right)+\eps M_0^{1/2}\left(t\right)M^{1/2}\left(t\right)\\
	&\leq2E_2\left(t\right)+2E^{1/2}_1\left(t\right)E^{1/2}_2\left(t\right)+2E_2\left(0\right)+2E^{1/2}_1\left(0\right)E^{1/2}_2\left(0\right)\\
	&+4\eps M^{1/2}\left(t\right)+\eps^{1/2}M\left(t\right)+4M^{1/2}_0\left(t\right)\xi_1^{1/2}\left(t\right)+M^{1/2}\left(t\right)\xi^{1/2}_1\left(t\right)\\
	&+4M_0\left(t\right)+2\eps M_0^{1/2}\left(t\right)M^{1/2}\left(t\right)+2\eps M\left(t\right)\\
    &\leq 4\eps^2 +2E_2\left(t\right)+2E^{1/2}_1\left(t\right)E^{1/2}_2\left(t\right)\\
	&+4\eps M^{1/2}\left(t\right)+\eps^{1/2}M\left(t\right)+4M^{1/2}_0\left(t\right)\xi_1^{1/2}\left(t\right)+M^{1/2}\left(t\right)\xi^{1/2}_1\left(t\right)\\
	&+4M_0\left(t\right)+2\eps M_0^{1/2}\left(t\right)M^{1/2}\left(t\right)+2\eps M\left(t\right).\\
	\end{aligned}
   \end{equation}

Besides, we can obtain:
\begin{equation}
	\begin{aligned}
	\int_{0}^{t}\int \lvert P^1 \rvert drdt\leq& M_1\left(t\right) +8M^{1/2}_0\left(t\right)\xi^{1/2}_1\left(t\right)+8M^{1/2}\left(t\right)\xi^{1/2}_1\left(t\right)+16\eta^{1/2}_1\left(t\right)\xi^{1/2}_1\left(t\right)\\
	&+16\eta_1^{1/2}\left(t\right)M^{1/2}_0\left(t\right)+2\eta^{1/2}_1M^{1/2}\left(t\right)+\eps M_0^{1/2}\left(t\right)M^{1/2}\left(t\right)+\eps M\left(t\right)
	\end{aligned}
\end{equation}

\begin{equation}
	\begin{aligned}
	\int_{0}^{t}\int \lvert P^2 \rvert drdt\leq& M_2\left(t\right)+4\eps M^{1/2}\left(t\right)+\eps^{1/2}M\left(t\right)+4M^{1/2}_0\left(t\right)\xi_1^{1/2}\left(t\right)+M^{1/2}\left(t\right)\xi^{1/2}_1\left(t\right)\\
	&+4M_0\left(t\right)+2\eps M_0^{1/2}\left(t\right)M^{1/2}\left(t\right)+2\eps M\left(t\right) .\\
	\end{aligned}
\end{equation}

From \eqref{dc3}, we know:
\begin{equation}
	\begin{aligned}
	\xi_1\left(t\right) + \frac{1}{16}M_{0,1}\left(t\right) \leq\xi_2\left(t\right)+\eps M^{1/2}_{0,1}\left(t\right)\xi^{1/2}_1\left(t\right)+\eps M^{1/2}_{0,1}\left(t\right)\xi^{1/2}_1\left(t\right)+\eps M_{0,1}\left(t\right).
	\end{aligned}
\end{equation}

Noting that
\begin{equation}
	\begin{aligned}
	&\phi^2_{r}\phi^2_{tt}\lesssim\left(\phi_{r}\phi_{tt}-\phi_{t}\phi_{tr}\right)^2 + \phi^2_{t}\phi^2_{tr}\\
	 &\phi^2_{r}\phi^2_{tr}\lesssim\left(\phi_{r}\phi_{tr}-\phi_{t}\phi_{tt}\right)^2 + \phi^2_{t}\phi^2_{tt},
	\end{aligned}
\end{equation}

So we have:
\begin{equation}
\begin{aligned}
M_{0,2}\left(t\right) &\lesssim M_{0,1}\left(t\right) + \eta_2\left(t\right) +2\eps^{1/2}M^{1/2}\left(t\right)M_{0,1}^{1/2}\left(t\right)\\
&+2\eps^{3/2}M^{1/2}\left(t\right)M_{0,1}^{1/2}\left(t\right)+\eps M_{0,1}\left(t\right).
\end{aligned}
\end{equation}

By definition, we know:
\begin{equation}
	M_0\left(t\right) = M_{0,1}\left(t\right)+M_{0,2}\left(t\right) .
\end{equation}

Using the div-curl lemma \ref{dc}, we have:
 \begin{equation}
 	\begin{aligned}
& \xi_2\left(t\right)\\
 &\lesssim\left(\hat{E}_2\left(0\right)  + \sup\limits_{0\leq t \leq T} \hat{E}_2\left(t\right) + M_0^{1/2}\left(t\right)\xi_1^{1/2}\left(t\right)\right)\\
 &\cdot\left(\hat{E}_2\left(0\right)+\hat{E}^{1/2}_1\left(0\right)\hat{E}^{1/2}_2\left(0\right) +\sup\limits_{0\leq t \leq T}\left(\hat{E}_2\left(t\right)+\hat{E}^{1/2}_1\left(t\right)\hat{E}^{1/2}_2\left(t\right)\right)+\int_{0}^{t}\int \lvert P^1 \rvert drdt\right)\\
 &\lesssim\left(\eps^2  + \sup\limits_{0\leq t \leq T} \hat{E}_2\left(t\right) + M_0^{1/2}\left(t\right)\xi_1^{1/2}\left(t\right)\right)\\
 &\cdot\left(\eps^2 +\sup\limits_{0\leq t \leq T}\left(\hat{E}_2\left(t\right)+\hat{E}^{1/2}_1\left(t\right)\hat{E}^{1/2}_2\left(t\right)\right)+\int_{0}^{t}\int \lvert P^1 \rvert drdt\right),
 	\end{aligned}
 \end{equation}

\begin{equation}
\begin{aligned}
	&\eta_2\left(t\right) \\
	&\lesssim\left(\hat{E}_1\left(0\right) +\sup\limits_{0\leq t \leq T} \hat{E}_1\left(t\right)+ \eps^{1/2}M^{1/2}\left(t\right)\eta_1^{1/2}\left(t\right) +M\left(t\right) \right)\\
	&\cdot\left(\hat{E}_2\left(0\right)+\hat{E}^{1/2}_1\left(0\right)\hat{E}^{1/2}_2\left(0\right) +\sup\limits_{0\leq t \leq T}\left(\hat{E}_2\left(t\right)+\hat{E}^{1/2}_1\left(t\right)\hat{E}^{1/2}_2\left(t\right)\right)+\int_{0}^{t}\int \lvert P^1 \rvert drdt\right)\\
	&\lesssim\left(\eps^2 +\sup\limits_{0\leq t \leq T} \hat{E}_1\left(t\right)+ \eps^{1/2}M^{1/2}\left(t\right)\eta_1^{1/2}\left(t\right) +M\left(t\right) \right)\\
	&\cdot\left(\eps^2 +\sup\limits_{0\leq t \leq T}\left(\hat{E}_2\left(t\right)+\hat{E}^{1/2}_1\left(t\right)\hat{E}^{1/2}_2\left(t\right)\right)+\int_{0}^{t}\int \lvert P^1 \rvert drdt\right).
\end{aligned}
\end{equation}

Based above and standard bootstrap arguments, we obtain:
\begin{equation}\label{sme}
\begin{aligned}
	&E_1\left(t\right)\lesssim \eps^2,\\
	&E_2\left(t\right)\lesssim \eps^2,\\
	&M_0\left(t\right)\lesssim \eps^2,\\
	&M\left(t\right)\lesssim\eps^2,\\
	&\xi_1\left(t\right)\lesssim \eps^4,\\
	&\xi_2\left(t\right)\lesssim \eps^4,\\
	&\eta_1\left(t\right)\lesssim \eps^4,\\
	&\eta_2\left(t\right)\lesssim \eps^4.
\end{aligned}
\end{equation}

Under  the assumption $\phi_t|_{r=0}=0$, and 
\begin{equation*}
\int_{0}^{\infty} \frac{\phi^2_t}{r}  \big|_{t=0} dr \lesssim \eps^2, 
\end{equation*}
we have:
\begin{equation}
\begin{aligned}
\phi^2_{t}&\leq  \int_{0}^{\infty} \phi^2_{tr} r dr +  \int_{0}^{\infty} \frac{\phi^2_t}{r} dr\\
&\leq E_1\left(t\right)+E_2\left(t\right)\\
&\lesssim \eps^2.
\end{aligned}
\end{equation}

 Similarly,
 \begin{equation}
\begin{aligned}
\phi^2_r&\leq \int_{0}^{\infty}\phi^2_{rr} r dr  + \int_{0}^{\infty}\frac{\phi^2_r}{r}dr\\
&\leq E_1\left(t\right)+E_2\left(t\right) \\
&\lesssim \eps^2.
\end{aligned}
\end{equation}

Based above, we obtain:
\begin{equation}
\|\phi_t\|_{L^{\infty}_{t,x}} + 	\|\phi_r\|_{L^{\infty}_{t,x}}\lesssim \eps.
\end{equation}

According to the  continuous induction arguments, we know  that the following inequality is actually true.
\begin{equation}
\|\phi_t\|_{L^{\infty}_{t,x}} + 	\|\phi_r\|_{L^{\infty}_{t,x}}\lesssim \sqrt{\eps}
\end{equation}

\section{Global existence of the solution}\label{77}
In this section, we will prove  that the energy  of the third derivative is uniformly bounded. Combining this and local existence results of Smith-Tataru \cite{Smith 2005}, we obatin the global existence of the solution.

Using the Sobolev embedding, we know:
\begin{equation}\label{s1}
\begin{aligned}
&\left(\int r \phi^4_{tt} dr\right)^{1/4} \leq \left(\int r \phi^2_{tt} dr\right)^{1/4}\left(\int r \phi^2_{ttr} dr\right)^{1/4} \leq \eps^{1/2} E^{1/4}_{3,q}\left(t\right),\\
&\left(\int r \phi^4_{tr} dr\right)^{1/4} \leq \left(\int r \phi^2_{tr} dr\right)^{1/4}\left(\int r \phi^2_{trr} dr\right)^{1/4} \leq \eps^{1/2} E^{1/4}_{3,s}\left(t\right),\\
&\left(\int r \phi^4_{rr} dr\right)^{1/4} \leq \left(\int r \phi^2_{rr} dr\right)^{1/4}\left(\int r \phi^2_{rrr} dr\right)^{1/4} \leq \eps^{1/2} E^{1/4}_{3,l}\left(t\right).
\end{aligned}
\end{equation}

Similarly,
\begin{equation}\label{s2}
	\begin{aligned}
	&\left(\int r\phi^6_{tt} dr \right)^{1/6}\leq\left(\int r\phi^2_{tt} dr \right)^{1/6}\left(\int r \phi^2_{ttr} dr \right)^{1/3}\leq 2\eps^{1/3}E^{1/3}_{3,q}\left(t\right),\\
	&\left(\int r\phi^6_{tr} dr \right)^{1/6}\leq\left(\int r\phi^2_{tr} dr \right)^{1/6}\left(\int r \phi^2_{trr} dr \right)^{1/3}\leq 2\eps^{1/3}E^{1/3}_{3,s}\left(t\right),\\
	&\left(\int r\phi^6_{rr} dr \right)^{1/6}\leq\left(\int r\phi^2_{rr} dr \right)^{1/6}\left(\int r \phi^2_{rrr} dr \right)^{1/3}\leq 2\eps^{1/3}E^{1/3}_{3,l}\left(t\right).
	\end{aligned}
\end{equation}

Using Newton-Leibniz formula, we have:
\begin{equation}\label{wqes}
\begin{aligned}
&r\phi^2_{tt} \leq 2\int r \phi_{tt}\phi_{ttr} dr \leq 2 E^{1/2}_2\left(t\right) E^{1/2}_{3,q}\left(t\right)\leq 2\eps E^{1/2}_{3,q}\left(t\right),\\
&r\phi^2_{tr} \leq 2\int r \phi_{tr}\phi_{trr} dr \leq 2 E^{1/2}_2\left(t\right) E^{1/2}_{3,s}\left(t\right) \leq 2\eps E^{1/2}_{3,s}\left(t\right),\\
&r\phi^2_{rr} \leq 2\int r \phi_{rr}\phi_{rrr} dr \leq 2 E^{1/2}_2\left(t\right) E^{1/2}_3\left(t\right)\leq  2\eps E^{1/2}_{3,l}\left(t\right).
\end{aligned}
\end{equation}

We note:
\begin{equation*}
\begin{aligned}
&\left(1-\phi^2_{t}\right)\phi_{trr}+\frac{\phi_{tr} \Delta}{r}-2\phi_{t}\phi_{tt}\phi_{rr}+\frac{\phi_{r}\left(\phi_{r}\phi_{tr}-\phi_{t}\phi_{tt}\right)}{r} \\
&=\left(1+\phi^2_{r}\right)\phi_{ttt}-2\phi_{t}\phi_{tr}\phi_{tr}-2\phi_{t}\phi_{r}\phi_{ttr},\\
\end{aligned}
\end{equation*}

\begin{equation}
	\begin{aligned}
	&r\left(1-\phi^2_{t}\right)^2\phi^2_{trr}+\frac{\phi^2_{tr}\Delta^2}{r}=-2\left(1-\phi^2_{t}\right)\phi_{trr}\phi_{tr}\Delta+ rS^2_{trr},
	\end{aligned}
\end{equation}
where 
\begin{equation*}
\begin{aligned}
S_{trr}:=&\left(1+\phi^2_{r}\right)\phi_{ttt}-2\phi_{t}\phi_{tr}\phi_{tr}-2\phi_{t}\phi_{r}\phi_{ttr}\\
&+2\phi_{t}\phi_{tt}\phi_{rr}-\frac{\phi_{r}\left(\phi_{r}\phi_{tr}-\phi_{t}\phi_{tt}\right)}{r}.
\end{aligned}
\end{equation*}

By integration by parts  and  combining \eqref{s1} and \eqref{s2}, we know:
\begin{equation}
	\begin{aligned}
	&-2\int\left(1-\phi_{t}\right)\Delta\phi_{trr}\phi_{tr} dr \\
	&= \int -2 \phi_{t}\phi^3_{tr}\Delta dr + \int 2\left(1-\phi^2_{t}\right)\left(\phi_{r}\phi_{rr}\phi^2_{tr}\right) dr-2\int \left(1-\phi^2_{t}\right)\left(\phi_{t}\phi^3_{tr}\right) dr\\ 
	&\leq 4 \int \lvert \phi^3_{tr} \phi_{t}\rvert dr + 4 \int \lvert \phi^2_{tr}\phi_{rr} \phi_{t}\rvert dr\\
	 &\leq 4\left(\int \frac{\phi^2_{t}}{r} dr \right)^{1/2}\left(\int r \phi^6_{tr} dr \right)^{1/2}+ 4\left(\int \frac{\phi^2_{r}}{r} dr \right)^{1/2}\left(\int r \phi^6_{tr} dr \right)^{1/3}\left(\int r \phi^6_{rr} dr \right)^{1/6}\\
	 & \leq 16\eps^2 E_{3,s}\left(t\right)+16\eps^2 E^{2/3}_{3,s}\left(t\right) E^{1/3}_{3,l}\left(t\right).
	\end{aligned} 
\end{equation}

Using hardy inequality and \eqref{wqes}, we obtain:
\begin{equation}
\begin{aligned}
&\int\frac{\phi^2_{r}\phi^2_{t}\phi^2_{tt}}{r} dr \\
&\leq \|r\phi^2_{tt}\|_{L^{\infty}}\left(\int \frac{\phi^4_{t}}{r^2}  dr + \int \frac{\phi^4_{r}}{r^2} dr \right)\\
&\leq 4 \|r\phi^2_{tt}\|_{L^{\infty}} \int \phi^2_{t}\phi^2_{tr}+ \phi^2_{r}\phi^2_{rr} dr\\
&\leq 4 \|r\phi^2_{tt}\|_{L^{\infty}} \left(\left(\int \frac{\phi^4_{t}}{r} dr \right)^{1/2}\left(\int r \phi^4_{tr} dr\right)^{1/2}  + \left(\int \frac{\phi^4_{r}}{r} dr \right)^{1/2}\left(\int r \phi^4_{rr} dr\right)^{1/2} \right)\\
&\leq 8\eps^3 E^{1/2}_{3,q}\left(t\right)\left( E^{1/2}_{3,s}\left(t\right) + E^{1/2}_{3,l}\left(t\right) \right).
\end{aligned}
\end{equation}

Combining above, we can get:
\begin{equation}
\begin{aligned}
E_{3,s}\left(t\right) &\leq E_{3,q}\left(t\right)+ 4\eps^3 E_{3,s}\left(t\right)+4\eps^3 E^{1/2}_{3,q}\left(t\right) E^{1/2}_{3,l}\left(t\right)+4\eps^2E_{3,q}\\
&+\eps^2 E_{3,s}\left(t\right)+16\eps^2 E_{3,s}\left(t\right)+16\eps^2 E^{2/3}_{3,s}\left(t\right) E^{1/3}_{3,l}\left(t\right)\\
&+8\eps^3 E^{1/2}_{3,q}\left(t\right)\left( E^{1/2}_{3,s}\left(t\right) + E^{1/2}_{3,l}\left(t\right) \right).
\end{aligned}
\end{equation}

We note:
\begin{equation*}
	\begin{aligned}
	&\left(1-\phi^2_{t}\right)\phi_{rrr}+ \frac{\phi_{rr}\Delta}{r} - \frac{\phi_{r}\Delta}{r^2}=\left(1+\phi^2_{r}\right)\phi_{ttr}+2\phi_{r}\phi_{rr}\phi_{tt}\\
	&-2\phi_{r}\phi_{tr}\phi_{tr}-2\phi_{t}\phi_{r}\phi_{trr}-\frac{\phi_{r}\left(\phi_{r}\phi_{rr}-\phi_{t}\phi_{tr}\right)}{r},
	\end{aligned}
\end{equation*}

\begin{equation}
	\begin{aligned}
    r\left(1-\phi^2_{t}\right)^2\phi^2_{rrr}+r\left(\frac{\phi_{rr}}{r}-\frac{\phi_{r}}{r^2}\right)^2\Delta^2+2r\left(1-\phi^2_{t}\right)\Delta\phi_{rrr}\left(\frac{\phi_{rr}}{r}-\frac{\phi_{r}}{r^2}\right)=rS^2_{rrr},
	\end{aligned}
\end{equation}
where
\begin{equation*}
	\begin{aligned}
    S_{rrr}:=&\left(1+\phi^2_{r}\right)\phi_{ttr}+2\phi_{r}\phi_{rr}\phi_{tt}\\
    &-2\phi_{r}\phi_{tr}\phi_{tr}-2\phi_{t}\phi_{r}\phi_{trr}-\frac{\phi_{r}\left(\phi_{r}\phi_{rr}-\phi_{t}\phi_{tr}\right)}{r}.
	\end{aligned}
\end{equation*}

By integration by parts  and  combining with \eqref{s1} and \eqref{s2}, we have:
\begin{equation}
	\begin{aligned}
	&2 \int r\left(1-\phi^2_{t}\right)\Delta \phi_{rrr}  \frac{\phi_{rr}}{r} dr\\
	& =\int 2\phi_{t}\phi_{tt}\phi^2_{rr} dr -\int 2 \left(1-\phi^2_{t}\right) \phi_{r}\phi^3_{rr} dr  + \int 2\left(1-\phi^2_{t}\right)\phi_{t}\phi_{tr}\phi^2_{rr} dr \\
	&\leq 2\left(\int \frac{\phi^2_{t}}{r} dr \right)^{1/2}\left(\int r \phi^6_{rr} dr \right)^{1/3}\left(\int r \phi^6_{tt} dr \right)^{1/6}
	+ 2\left(\int \frac{\phi^2_{t}}{r} dr \right)^{1/2}\left(\int r \phi^6_{rr} dr \right)^{1/2}\\
	&+ 2 \left(\int \frac{\phi^2_{t}}{r} dr \right)^{1/2}\left(\int r \phi^6_{rr} dr \right)^{1/3}\left(\int r \phi^6_{tr} dr \right)^{1/6}
	\\
	&\leq 2\eps^2 E^{2/3}_{3,l}\left(t\right)E^{1/3}_{3,q}\left(t\right)+ 2\eps^2 E_{3,l}\left(t\right)+ 2\eps^2 E^{2/3}_{3,l}\left(t\right)E^{1/3}_{3,s}\left(t\right).
	\end{aligned}
\end{equation}

Using integration by parts, we can obtain:
\begin{equation}
	\begin{aligned}
&	-2\int\left(1-\phi^2_{t}\right)\Delta\phi_{rrr}\frac{\phi^2_{r}}{r} dr\\
& =2\int \left(1-\phi^2_{t}\right)\Delta \frac{\phi^2_{rr}}{r} dr -2\int \left(1-\phi^2_{t}\right)\Delta\frac{\phi_{rr}\phi_{r}}{r^2}dr
	+ 2\int \left(\left(1-\phi^2_{t}\right)\Delta\right)_r\frac{\phi_{rr}\phi_{r}}{r}dr\\
	&=2\int \left(1-\phi^2_{t}\right)\Delta \frac{\phi^2_{rr}}{r} dr -4\int \left(1-\phi^2_{t}\right)\Delta\frac{\phi_{rr}\phi_{r}}{r^2} dr + 2\int \left(1-\phi^2_{t}\right)\Delta\frac{\phi_{rr}\phi_{r}}{r^2} dr \\
	&+2\int \left(\left(1-\phi^2_{t}\right)\Delta\right)_r\frac{\phi_{rr}\phi_{r}}{r} dr \\
	&=2\int \left(1-\phi^2_{t}\right)\Delta \frac{\phi^2_{rr}}{r} dr -4\int \left(1-\phi^2_{t}\right)\Delta\frac{\phi_{rr}\phi_{r}}{r^2} dr+2 \int \left(1-\phi^2_{t}\right)\Delta \frac{\phi^2_{r}}{r^3} dr\\
	& +2\int \left(\left(1-\phi^2_{t}\right)\Delta\right)_r\frac{\phi_{rr}\phi_{r}}{r} dr - \int \left(\left(1-\phi^2_{t}\right)\Delta\right)_r\frac{\phi^2_{r}}{r^2} dr \\
	&= 2 \int r \left(1-\phi_{t}\right)\Delta \left(\frac{\phi_{r}\phi_{rr}}{r}-\frac{\phi_{r}}{r^2}\right)^2 dr+\int \left(\left(1-\phi^2_{t}\right)\Delta\right)_r\frac{\phi_{rr}\phi_{r}}{r} dr\\
	&+ \int \left(\left(1-\phi^2_{t}\right)\Delta\right)_r \phi_{r}\left(\frac{\phi_{rr}}{r}-\frac{\phi^2_{r}}{r^2}\right) dr.
	\end{aligned}
\end{equation}

By \eqref{wqes},  h\"older and hardy inequality, we can get:
\begin{equation}
\begin{aligned}
&\int \left(\left(1-\phi^2_{t}\right)\Delta\right)_r \phi_{r}\left(\frac{\phi_{rr}}{r}-\frac{\phi^2_{r}}{r^2}\right) dr\\
&\leq 4 \left(\int \frac{\phi_{t}\phi_{r}\phi_{tr}+\phi^2_{r}\phi_{rr}}{r} dr \right)^{1/2}\left(\int r \left(\frac{\phi_{rr}}{r}-\frac{\phi^2_{r}}{r^2}\right)^2 dr \right)^{1/2}\\
&\leq 4 \left(\|r\phi^2_{tr}\|_{L^{\infty}}+\|r\phi^2_{rr}\|_{L^{\infty}}\right)^{1/2}\left(\int \frac{\phi^4_{t}}{r^2} dr + \int \frac{\phi^4_{t}}{r^2}  dr \right)^{1/2} E^{1/2}_{3,l}\left(t\right)\\
&\leq 8\eps^{3/2} \left(E^{1/4}_{3,s}\left(t\right)+E^{1/4}_{3,l}\left(t\right)\right)\left( E^{1/4}_{3,s}\left(t\right) + E^{1/4}_{3,l}\left(t\right) \right) E^{1/2}_{3,l}\left(t\right).
\end{aligned}
\end{equation}

By some simple calcaution, we know:
\begin{equation}
\begin{aligned}
\int \left(\left(1-\phi^2_{t}\right)\Delta\right)_r\frac{\phi_{rr}\phi_{r}}{r} dr=& 2 \int \left(1-\phi^2_{t}\right) \frac{\phi^2_{rr}\phi^2_{r}}{r} dr -2 \int \Delta \frac{\phi_{t}\phi_{tr}\phi_{r}\phi_{rr}}{r} dr \\
&-2\int \left(1-\phi^2_{t} \right)\frac{\phi_{t}\phi_{tr}\phi_{r}\phi_{rr}}{r} dr.
\end{aligned}
\end{equation}

From \eqref{s1}, and using h\"older inequality, we know:
\begin{equation*}
\begin{aligned}
	&\int \frac{\phi_{t}\phi_{tr}\phi_{r}\phi_{rr}}{r} dr\\
	&=\int r \phi_{t}\phi_{tr}\phi_{rr}\left(\frac{\phi_{rr}}{r}-\frac{\phi^2_{r}}{r^2}\right) dr +\int \phi_{t}\phi_{tr}\phi^2_{rr} dr \\
\end{aligned}
\end{equation*}
\begin{equation}
\begin{aligned}
	&\leq\left(\int r \left(\frac{\phi_{rr}}{r}-\frac{\phi^2_{r}}{r^2} \right)^2 dr \right)^{1/2}\left(\int r\phi^2_{t}\phi^2_{tr}\phi^2_{rr} dr \right)^{1/2}\\
	&+\left(\int \frac{\phi^2_{t}}{r} dr \right)^{1/2}\left(\int r \phi^6_{rr} dr \right)^{1/3}\left(\int r \phi^6_{tr} dr \right)^{1/6}\\
	&\leq 2\eps^{3/2}E^{1/2}_{3,l}\left(t\right)E^{1/4}_{3,s}\left(t\right)E^{1/4}_{3,l}\left(t\right)+2\eps^2E^{2/3}_{3,l}\left(t\right)E^{1/3}_{3,s}\left(t\right),
\end{aligned}
\end{equation}

Noting \eqref{s1} and \eqref{wqes}, we use h\"older and hardy inequality to obtain:
\begin{equation}
\begin{aligned}
&\int\frac{\phi^2_{r}\phi^2_{t}\phi^2_{tr}}{r} dr \\
&\leq \|r\phi^2_{tr}\|_{L^{\infty}}\left(\int \frac{\phi^4_{t}}{r^2} dr  + \int \frac{\phi^4_{r}}{r^2} dr \right)\\
&\leq 4 \|r\phi^2_{tr}\|_{L^{\infty}} \int \phi^2_{t}\phi^2_{tr}+ \phi^2_{r}\phi^2_{rr} dr\\
&\leq 4 \|r\phi^2_{tr}\|_{L^{\infty}} \left(\left(\int \frac{\phi^4_{t}}{r} dr \right)^{1/2}\left(\int r \phi^4_{tr} dr\right)^{1/2}  + \left(\int \frac{\phi^4_{r}}{r} dr \right)^{1/2}\left(\int r \phi^4_{rr} dr\right)^{1/2} \right)\\
&\leq 8\eps^3 E^{1/2}_{3,s}\left(t\right)\left( E^{1/2}_{3,s}\left(t\right) + E^{1/2}_{3,l}\left(t\right) \right).
\end{aligned}
\end{equation}

Similarly,
\begin{equation}
\begin{aligned}
\int\frac{\phi^4_{r}\phi^2_{rr}}{r} dr &\leq \|r\phi^2_{rr}\|_{L^{\infty}}\int \frac{\phi^4_{r}}{r^2} dr \\
&\leq 4 \|r\phi^2_{rr}\|_{L^{\infty}} \int  \phi^2_{r}\phi^2_{rr} dr\\
&\leq 4 \|r\phi^2_{rr}\|_{L^{\infty}}  \left(\int \frac{\phi^4_{r}}{r} dr \right)^{1/2}\left(\int r \phi^4_{rr} dr\right)^{1/2} \\
&\leq 8\eps^3 E^{1/2}_{3,l}\left(t\right) E^{1/2}_{3,l}\left(t\right). 
\end{aligned}
\end{equation}

Based above,  we can get:
\begin{equation}
\begin{aligned}
	E_{3,l}\left(t\right) &\leq E_{3,q}\left(t\right)+4\eps^3 E^{1/2}_{3,q}\left(t\right) E^{1/2}_{3,l}\left(t\right)+4\eps^3 E_{3,s}\left(t\right)\\
	&+4\eps^2 E_{3,s}\left(t\right)+ 8\eps^3 E^{1/2}_{3,s}\left(t\right)\left( E^{1/2}_{3,s}\left(t\right) + E^{1/2}_{3,l}\left(t\right) \right)+8\eps^3 E^{1/2}_{3,l}\left(t\right) E^{1/2}_{3,l}\left(t\right)\\
	&+2\eps^{3/2}E^{1/2}_{3,l}\left(t\right)E^{1/4}_{3,s}\left(t\right)E^{1/4}_{3,l}\left(t\right)+2\eps^2E^{2/3}_{3,l}\left(t\right)E^{1/3}_{3,s}\left(t\right)\\
	&+8\eps^{3/2} \left(E^{1/4}_{3,s}\left(t\right)+E^{1/4}_{3,l}\left(t\right)\right)\left( E^{1/4}_{3,s}\left(t\right) + E^{1/4}_{3,l}\left(t\right) \right) E^{1/2}_{3,l}\left(t\right). 
\end{aligned}	
\end{equation}

It is obvious that
\begin{equation}
E_3\left(t\right) = E_{3,q}\left(t\right) + E_{3,s}\left(t\right) + E_{3,l}\left(t\right),
\end{equation}

\begin{equation}
E_{3,q}\left(t\right)\leq 4\tilde{E}_3\left(t\right), 
\end{equation} 
and 
\begin{equation}
	\begin{aligned}
	\hat{E}_3\left(t\right) &\leq \tilde{E}_3\left(t\right) + 3\eps^{3/2} E^{3/4}_{3,q}\left(t\right) E^{1/4}_{3,s}\left(t\right) + \eps^{3/2} E^{1/2}_{3,q}\left(t\right) E^{1/2}_{3,s}\left(t\right) + \eps^{3/2} E_{3,q}\left(t\right)\\
	&+\eps^{5/2} E^{3/4}_{3,q}\left(t\right) E^{1/4}_{3,s}\left(t\right)+\eps^{5/2} E_{3,q}\left(t\right).
	\end{aligned}
\end{equation}

By integration by parts, we obtain:
\begin{equation*}
	\begin{aligned}
	\int_{0}^{t} \int \phi^4_{tt} drdt &=\int \phi_{t}\phi^3_{tt} dr\bigg|^{t}_0 -3 \int_{0}^{t} \int\phi_{t}\phi^2_{tt}\phi_{ttt} drdt\\
	&\leq \frac{1}{2} \int_{0}^{t}\int \phi^4_{tt} drdt + 12\int_{0}^{t}\int \phi^2_{t}\phi^2_{ttt} drdt +\int \phi_{t}\phi^3_{tt} dr\bigg|^{t}_0,
	\end{aligned}
\end{equation*}

\begin{equation*}
	\int \phi_{t}\phi^3_{tt} dr \leq \left(\int \frac{\phi^2_{t}}{r} dr \right)^{1/2}\left(\int r \phi^6_{tt} dr \right)^{1/2}\leq \eps^2 E_{3,q}\left(t\right),
\end{equation*}

\begin{equation}\label{s41}
\int_{0}^{t} \int \phi^4_{tt} drdt \leq 2\eps^2 E_{3,q}\left(t\right) + 24	M_h\left(t\right).
\end{equation}

Similarly, 
\begin{equation*}
\begin{aligned}
\int_{0}^{t} \int \phi^4_{tr} drdt &=\int \phi_{r}\phi^3_{tr} dr\bigg|^{t}_0 -3 \int_{0}^{t} \int\phi_{r}\phi^2_{tr}\phi_{ttr} drdt\\
&\leq \frac{1}{2} \int_{0}^{t}\int \phi^4_{tr} drdt + 12\int_{0}^{t}\int \phi^2_{t}\phi^2_{ttr} drdt +\int \phi_{r}\phi^3_{tr} dr\bigg|^{t}_0,
\end{aligned}
\end{equation*}

\begin{equation*}
\int \phi_{r}\phi^3_{tr} dr \leq \left(\int \frac{\phi^2_{r}}{r} dr \right)^{1/2}\left(\int r \phi^6_{tr} dr \right)^{1/2}\leq \eps^2, E_{3,s}\left(t\right)
\end{equation*}

\begin{equation}\label{s42}
\int_{0}^{t} \int \phi^4_{tr} drdt \leq 2\eps^2 E_{3,s}\left(t\right) + 24	M_h\left(t\right).
\end{equation}

Besides, we have:
\begin{equation}\label{cha1}
	\begin{aligned}
 &\left(\int_{0}^{t}\int\left(\phi_{r}\phi_{tr}-\phi_{t}\phi_{tt}\right)^4 dr dt\right)^{1/4}\\
 &\leq 16 \left(\int \phi^4_{r}\phi^4_{tr}+\phi^4_{t}\phi^4_{tt} drdt\right)^{1/4}\\
 &\leq 16\eps^{1/2}\left(\int \phi^4_{tr}+\phi^4_{tt} drdt\right)^{1/4} \\
 &\leq 16 \eps^{1/2} \left( 2\eps^2 E_{3,q}\left(t\right) +2\eps^2 E_{3,s}\left(t\right) + 48	M_h\left(t\right) \right)^{1/4},
 \end{aligned}
\end{equation}
and
\begin{equation}\label{cha2}
\begin{aligned}
&\left(\int_{0}^{t}\int\left(\phi_{t}\phi_{tr}-\phi_{r}\phi_{tt}\right)^4 dr dt\right)^{1/4}\\
 &\leq 16 \left(\int \phi^4_{r}\phi^4_{tr}+\phi^4_{t}\phi^4_{tt} drdt\right)^{1/4}\\
&\leq 16\eps^{1/2}\left(\int \phi^4_{tr}+\phi^4_{tt} drdt\right)^{1/4} \\
&\leq 16 \eps^{1/2} \left( 2\eps^2 E_{3,q}\left(t\right) +2\eps^2 E_{3,s}\left(t\right) + 48	M_h\left(t\right) \right)^{1/4}.
\end{aligned}
\end{equation}

It is easy to know:
\begin{equation}
\begin{aligned}
\int_{0}^{t}\int \lvert Q^2 \rvert drdt &\leq 4M\left(t\right)+4\eps^{1/2}M^{1/2}_0\left(t\right)M^{1/2}_0\left(t\right)\\
&+2\eps^{3/2}M^{1/2}_0\left(t\right)M^{1/2}_0\left(t\right)+2\eps M\left(t\right)\\
&\leq 4\eps^2+4\eps^{5/2}+2\eps^{7/2}+2\eps^{3}.
\end{aligned}
\end{equation}

Combining  \eqref{s41}, \eqref{s42}, \eqref{cha1} and \eqref{cha2},  we can use h\"oder inequality to obtain:
\begin{equation}
	\begin{aligned}
		&\int_{0}^{t}\int \lvert T^1 \rvert  drdt \leq 8\zeta^{1/2}_1\left(t\right)\gamma^{1/2}_1\left(t\right)+16M^{1/2}_h\left(t\right)\gamma^{1/2}_1\left(t\right) \\
		&+16\eps M^{1/2}_h\left(t\right)\gamma^{1/2}_1\left(t\right)\\  
	    & +16 \eps^{3/2} \left( 2\eps^2 E_{3,q}\left(t\right) +2\eps^2 E_{3,s}\left(t\right) + 48	M_h\left(t\right) \right)^{1/2} \gamma^{1/2}_1\left(t\right)\\
		&+ 256\eps^{3/2}\left( 2\eps^2 E_{3,q}\left(t\right) +2\eps^2 E_{3,s}\left(t\right) + 48	M_h\left(t\right) \right)^{1/2}
		\left(\gamma^{1/2}_1\left(t\right)+\eps^{1/2}\gamma^{1/2}_1\left(t\right)\right)\\
	    &+\left( 2\eps^2 E_{3,q}\left(t\right) + 24	M_h\left(t\right)\right)^{1/2}\left(\gamma^{1/2}_1\left(t\right)+\eps^{1/2}\gamma^{1/2}_1\left(t\right)\right)\\
		&+\left( 2\eps^2 E_{3,s}\left(t\right) + 24	M_h\left(t\right)\right)^{1/2}\left(\gamma^{1/2}_1\left(t\right)+\eps^{1/2}\gamma^{1/2}_1\left(t\right)\right)\\
		&+2M^{1/2}_h\left(t\right)\left(\gamma^{1/2}_1\left(t\right)+\eps^{1/2}\gamma^{1/2}_1\left(t\right)\right),
	\end{aligned}
\end{equation}

\begin{equation}
\begin{aligned}
&\zeta_1\left(t\right) \leq \zeta_2 \left(t\right) +2\eps  M^{1/2}_h\left(t\right)\left(2\eps^2 E_{3,q}\left(t\right) + 24	M_h\left(t\right)\right)^{1/2}\\
&+2\eps M^{1/2}_h\left(t\right)\left(2\eps^2 E_{3,q}\left(t\right) + 24	M_h\left(t\right)\right)^{1/4}\left(2\eps^2 E_{3,s}\left(t\right) + 24	M_h\left(t\right)\right)^{1/4}\\
&+48 \eps^{9/8} M^{1/2}_h\left(t\right)\left(2\eps^2 E_{3,s}\left(t\right) + 24	M_h\left(t\right)\right)^{1/4}
  \left( 2\eps^2 E_{3,q}\left(t\right) +2\eps^2 E_{3,s}\left(t\right) + 48	M_h\left(t\right) \right)^{1/4}\\
&+48 \eps^{9/8} M^{1/2}_h\left(t\right)\left(2\eps^2 E_{3,q}\left(t\right) + 24	M_h\left(t\right)\right)^{1/4}
 \left( 2\eps^2 E_{3,q}\left(t\right) +2\eps^2 E_{3,s}\left(t\right) + 48	M_h\left(t\right) \right)^{1/4}.
\end{aligned}
\end{equation}

 From \eqref{zch} and combining \eqref{wqes}, \eqref{s41}, \eqref{s42}, \eqref{cha1} and \eqref{cha2},  we can use h\"oder inequality to obtain:
\begin{equation*}
	\begin{aligned}
	&\gamma_1\left(t\right) + \frac{1}{8} M_{h,1}\left(t\right) \leq \gamma_2\left(t\right) + \frac{1}{2} M^{1/2}_{h,1}\left(t\right)\gamma^{1/2}_1\left(t\right)+\frac{1}{2}\eps M^{1/2}_{h,1}\left(t\right)\gamma^{1/2}_1\left(t\right)\\
	&+\eps E^{1/4}_{3,q}\left(t\right)E^{1/4}_{3,s}\left(t\right)\eta_1^{1/2}\left(t\right)\gamma^{1/2}_1\left(t\right)+\eps E^{1/2}_{3,q}\left(t\right)\eta_1^{1/2}\left(t\right)\gamma^{1/2}_1\left(t\right)\\
	&+3 \eps E^{1/2}_{3,s}\left(t\right)\eta_1^{1/2}\left(t\right)\gamma^{1/2}_1\left(t\right)+3\eps E^{1/4}_{3,q}\left(t\right)E^{1/4}_{3,s}\left(t\right)\eta_1^{1/2}\left(t\right)\gamma^{1/2}_1\left(t\right)\\
	&+3\eps E^{1/4}_{3,q}\left(t\right)E^{1/4}_{3,s}\left(t\right)\eta_1^{1/2}\left(t\right)\gamma^{1/2}_1\left(t\right)+3 \eps E^{1/2}_{3,q}\left(t\right)\eta_1^{1/2}\left(t\right)\gamma^{1/2}_1\left(t\right)\\
	&+4 \eps^{1/2} \left( 2\eps^2 E_{3,q}\left(t\right) +2\eps^2 E_{3,s}\left(t\right) + 48	M_h\left(t\right) \right)^{1/4}
    \left(2\eps^2 E_{3,q}\left(t\right) + 24	M_h\left(t\right)\right)^{1/4} \eps^{1/2}\gamma^{1/2}_1\left(t\right)\\
    &+16 \eps^{1/2} \left( 2\eps^2 E_{3,q}\left(t\right) +2\eps^2 E_{3,s}\left(t\right) + 48	M_h\left(t\right) \right)^{1/4}
    \left(2\eps^2 E_{3,q}\left(t\right) + 24	M_h\left(t\right)\right)^{1/4} \eps^{3/2}\gamma^{1/2}_1\left(t\right)\\
     &+12 \eps^{1/2} \left( 2\eps^2 E_{3,q}\left(t\right) +2\eps^2 E_{3,s}\left(t\right) + 48	M_h\left(t\right) \right)^{1/4}
    \left(2\eps^2 E_{3,s}\left(t\right) + 24	M_h\left(t\right)\right)^{1/4} \eps^{1/2}\gamma^{1/2}_1\left(t\right)\\
    &+12 \eps^{1/2} \left( 2\eps^2 E_{3,q}\left(t\right) +2\eps^2 E_{3,s}\left(t\right) + 48	M_h\left(t\right) \right)^{1/4}
    \left(2\eps^2 E_{3,s}\left(t\right) + 24	M_h\left(t\right)\right)^{1/4} \eps^{3/2}\gamma^{1/2}_1\left(t\right)\\
     &+12 \eps^{1/2} \left( 2\eps^2 E_{3,q}\left(t\right) +2\eps^2 E_{3,s}\left(t\right) + 48	M_h\left(t\right) \right)^{1/4} \left(2\eps^2 E_{3,q}\left(t\right) + 24	M_h\left(t\right)\right)^{1/4} \eps^{3/2}\gamma^{1/2}_1\left(t\right)\\
\end{aligned}
\end{equation*}
\begin{equation}
\begin{aligned} 
    &+\eps^{3/2}M^{1/2}_{h,1}\left(t\right)\\
	&+16 \eps \left( 2\eps^2 E_{3,q}\left(t\right) +2\eps^2 E_{3,s}\left(t\right) + 48	M_h\left(t\right) \right)^{1/2}  M^{1/2}_{h,1}\left(t\right)\\
	&+\eps  M^{1/2}_{h,1}\left(t\right)\left(2\eps^2 E_{3,q}\left(t\right) + 24	M_h\left(t\right)\right)^{1/4}\left(2\eps^2 E_{3,s}\left(t\right) + 24	M_h\left(t\right)\right)^{1/4}\\
	&+\eps  M^{1/2}_{h,1}\left(t\right)\left(2\eps^2 E_{3,q}\left(t\right) + 24	M_h\left(t\right)\right)^{1/2}\\
	&+4 \eps^2 \left( 2\eps^2 E_{3,q}\left(t\right) +2\eps^2 E_{3,s}\left(t\right) + 48	M_h\left(t\right) \right)^{1/2} M^{1/2}_{h,1}\left(t\right).\\
	\end{aligned}
\end{equation}

Using the div-curl lemma \ref{dc}, we have:
\begin{equation}
\begin{aligned}
\zeta _2\left(t\right)&\lesssim\left(\hat{E}_1\left(0\right)  + \sup\limits_{0\leq t \leq T} \hat{E}_1\left(t\right) + \int_{0}^{t}\int \lvert Q^2 \rvert drdt\right )\\
&\cdot\left(\hat{E}_3\left(0\right) +\sup\limits_{0\leq t \leq T}\hat{E}_3\left(t\right)+\int_{0}^{t}\int \lvert T^1 \rvert drdt \right)\\
&\lesssim\left(\eps^2  + \sup\limits_{0\leq t \leq T} \hat{E}_1\left(t\right) + \int_{0}^{t}\int \lvert Q^2 \rvert drdt\right )\\
&\cdot\left(\hat{E}_3\left(0\right) +\sup\limits_{0\leq t \leq T}\hat{E}_3\left(t\right)+\int_{0}^{t}\int \lvert T^1 \rvert drdt \right),
\end{aligned}
\end{equation}

\begin{equation}
\begin{aligned}
&\gamma_2\left(t\right)\\
&\lesssim\left(\hat{E}_3\left(0\right)  + \sup\limits_{0\leq t \leq T} \hat{E}_3\left(t\right) + \int_{0}^{t}\int \lvert T^1 \rvert drdt\right)\\
&\cdot\left(\hat{E}_2\left(0\right)+\hat{E}^{1/2}_1\left(0\right)\hat{E}^{1/2}_2\left(0\right) +\sup\limits_{0\leq t \leq T}\left(\hat{E}_2\left(t\right)+\hat{E}^{1/2}_1\left(t\right)\hat{E}^{1/2}_2\left(t\right)\right)+\int_{0}^{t}\int \lvert P^1 \rvert drdt\right)\\
&\lesssim\left(\hat{E}_3\left(0\right)  + \sup\limits_{0\leq t \leq T} \hat{E}_3\left(t\right) + \int_{0}^{t}\int \lvert T^1 \rvert drdt \right)\\
&\cdot\left(\eps^2 +\sup\limits_{0\leq t \leq T}\left(\hat{E}_2\left(t\right)+\hat{E}^{1/2}_1\left(t\right)\hat{E}^{1/2}_2\left(t\right)\right)+\int_{0}^{t}\int \lvert P^1 \rvert drdt \right).
\end{aligned}
\end{equation}

Noting that
\begin{equation}
\begin{aligned}
&\phi^2_{r}\phi^2_{ttt}\lesssim\left(\phi_{r}\phi_{ttt}-\phi_{t}\phi_{ttr}\right)^2 + \phi^2_{t}\phi^2_{ttr}\\
&\phi^2_{r}\phi^2_{ttr}\lesssim\left(\phi_{r}\phi_{ttr}-\phi_{t}\phi_{ttt}\right)^2 + \phi^2_{t}\phi^2_{ttt},
\end{aligned}
\end{equation}
so we have:
\begin{equation}
\begin{aligned}
M_{h,2}\left(t\right) &\lesssim M_{h,1}\left(t\right) + \zeta_1\left(t\right) .
\end{aligned}
\end{equation}
By definition, we know:
\begin{equation}
M_h\left(t\right) = M_{h,1}\left(t\right)+M_{h,2}\left(t\right) .
\end{equation}

Based above, we can obtain the following estimates
\begin{equation}
	E_3\left(t\right) \leq 16 E_3\left(0\right)
\end{equation}
by the continuous induction method and  standard bootstrap arguments.

Accoring to the local existence results of Smith-Tataru \cite{Smith 2005}, we know the extremal hypersurface equation \eqref{equ} exists a global solution.

\section{Stability}\label{88}
In this section, we use the homotopy method to prove the stability of solution for the system \eqref{equs}.

We define $ u:= u \left(\lambda; t, x\right)$ is the solution for the following system
\begin{equation}\label{equtl}
\left\{
\begin{aligned}
& (\frac{r u_t }{\Delta_\lambda^{1/2} })_t - (\frac{r u_r }{\Delta_\lambda^{1/2}})_r=0,\\
& t=0: u=\lambda\phi_0+\left(1-\lambda\right)\widetilde{\phi}_0, u_{t}=\phi_1 + \left(1-\lambda\right)\widetilde{\phi}_1,
\end{aligned}
\right. 
\end{equation}
where
\begin{equation*}
	\Delta_\lambda :=\left(1+u_r^2-u_t^2\right).
\end{equation*}

From Newton-Leibniz formula, we know:
\begin{equation}\label{tl}
	\phi-\widetilde{\phi}= \int_{0}^{1} \frac{\partial u}{\partial \lambda} d\lambda
\end{equation}. 

So we only need to estimate the $H^1$ norm for $ \frac{\partial u}{\partial \lambda} $.

Just like in section \ref{bal}, we establish some balance law for \eqref{equtl}.

We multiply both sides of \eqref{equtl} by $\frac{\phi_{r}}{r^2}$, we have:
\begin{equation}
\frac{u_{r}}{r^2}\left[\left(\frac{ru_{t}}{\Delta_\lambda^{1/2}}\right)_t-\left(\frac{ru_{r}}{\Delta_\lambda^{1/2}}\right)_r\right]=0.
\end{equation}
It is not difficult to obtain:
\begin{equation}\label{phu1}
\left(\frac{u_r u_{t}}{r\Delta_\lambda^{1/2}}\right)_t -\frac{1}{2} \left(\frac{u^2_{t}+u^2_{r}}{r\Delta_\lambda^{1/2}}\right)_r=Q_\lambda^2,
\end{equation}
where 
\begin{equation*}
Q_\lambda^2=-\frac{1}{2}u^2_{t}\left(\frac{1}{r\Delta_\lambda^{1/2}}\right)_r+\frac{1}{2}u^2_r\left(\frac{1}{r\Delta_\lambda^{1/2}}\right)_r+\frac{2u^2_{r}}{r^2\Delta_\lambda^{1/2}}.
\end{equation*}

Taking the first derivative of the equation \eqref{equtl} with respect to $\lambda$, we can get:
\begin{equation}\label{1-derl}
\left(\frac{r u_{\lambda t}\left(1+u^2_r\right)-r u_t u_r u_{\lambda r}}{\Delta_\lambda^{3/2} }\right)_t - \left(\frac{r u_{\lambda r}\left( 1-\phi^2_t\right) + r u_r u_t u_{\lambda t}}{\Delta_\lambda ^{3/2}}\right)_r = 0 .
\end{equation}

Taking the first derivative of the equation \eqref{equtl} with respect to time, we have:
\begin{equation}\label{1-derrll}
r\left(\frac{u_{tt}\left(1+u^2_{r}\right)-u_{t}u_{r}u_{tr}}{\Delta_\lambda^{3/2}}\right)_t-r\left(\frac{u_{tr}\left(1-u^2_{t}\right)+u_{r}u_{t}u_{tt}}{\Delta_\lambda^{3/2}}\right)_r-\left(\frac{u_{r}}{\Delta_\lambda^{1/2}}\right)_t=0.
\end{equation}

Let us multiply both side of \eqref{1-derl} by $u_{\lambda t}$, we can obtain:
\begin{equation}
u_{\lambda t}\left[\left(\frac{r u_{\lambda t}\left(1+u^2_r\right)-r u_t u_r u_{\lambda r}}{\Delta_\lambda^{3/2} }\right)_t - \left(\frac{r u_{\lambda r}\left( 1-\phi^2_t\right) + r u_r u_t u_{\lambda t}}{\Delta_\lambda ^{3/2}}\right)_r\right] = 0 .
\end{equation}

By some  simple calculation, it is not difficult to see the following equality:
\begin{equation}\label{phu2}
\begin{aligned}
\frac{1}{2} \left(\frac{ru^2_{\lambda t}\left(1+u^2_r\right)+ru^2_{ \lambda r}\left(1-u^2_t\right)}{\Delta_\lambda^{3/2}}\right)_t-\left( \frac{r u_{\lambda t}u _{\lambda r}\left(1-u^2_t\right)+r u_r u_t u^2_{\lambda t}}{\Delta_\lambda ^{3/2}}\right)_r=W_\lambda^1,
\end{aligned}
\end{equation}
where
\begin{equation*}
W_\lambda^1=-\frac{1}{2}u^2_{\lambda t}\left(\frac{r\left(1+ u^2_r\right)}{\Delta_\lambda^{3/2}}\right)_t+u_{\lambda t}u_{\lambda r}\left(\frac{ru_t u_r}{\Delta_\lambda^{3/2}}\right)_t+\frac{1}{2}u^2_{tr}\left(\frac{r\left(1-u^2_t\right)}{\Delta_\lambda^{3/2}}\right)_t.
\end{equation*}

In fact,
\begin{equation}
\begin{aligned}
W_\lambda^1&=
r\frac{9}{2}\frac{1}{\Delta_\lambda^{5/2}}\left(u_{r}u_{\lambda r}-u_{t} u_{\lambda t}\right)\left(u_{tr}u_{\lambda r}\left(1-\phi^2_{t}\right)-u_{tt}u_{\lambda t}\left(1+\phi^2_{r}\right)+2u_{\lambda t}u_{tr}u_{r}u_{t}\right)\\
&+r\frac{9}{2}\frac{1}{\Delta_\lambda^{5/2}}u_{t}u_{\lambda r}\left(1-u^2_{t}\right)\left(u_{\lambda t}u_{tr}-u_{\lambda r}u_{tt}\right)\\
&+r\frac{9}{2}\frac{1}{\Delta_\lambda^{5/2}}u_{r}u_{\lambda t}\left(1+u^2_{r}\right)\left(u_{\lambda r}u_{tt}-u_{\lambda t}u_{tr}\right)\\
&+2r\frac{9}{2}\frac{1}{\Delta_\lambda^{5/2}}
u^2_{t}u_{r}u_{\lambda t}\left(u_{\lambda t}u_{tr}-u_{\lambda r}u_{tt}\right)\\
&+r\left(\frac{1}{\Delta_\lambda^{3/2}}\right)\left(u_{\lambda r}u_{t}-u_{\lambda t}u_{r}\right)\left(u_{\lambda t}u_{tr}-u_{\lambda r}u_{tt}\right).
\end{aligned}
\end{equation}

Let us multiply both side of \eqref{1-derrll} by $\left(u_{tr}+ \frac{u_t}{2r}\right)$, we can obtain:
\begin{equation}
\begin{aligned}
&\left(u_{tr}+ \frac{u_t}{2r}\right)\left[r\left(\frac{u_{tt}\left(1+u^2_{r}\right)-u_{t}u_{r}u_{tr}}{\Delta_\lambda^{3/2}}\right)_t-r\left(\frac{u_{tr}\left(1-u^2_{t}\right)+u_{r}u_{t}u_{tt}}{\Delta_{\lambda}^{3/2}}\right)_r\right]\\
&-\left(u_{tr}+\frac{u_{t}}{2r}\right)\left(\frac{u_{r}}{\Delta_{\lambda}^{1/2}}\right)_t=0.
\end{aligned}
\end{equation}

According to a simple calculation, it is not difficult to see following equality:
\begin{equation}\label{phu3}
\begin{aligned}
&\left(\frac{ru_{tt}u_{tr}\left(1+u^2_{r}\right)-ru_{t}u_{r}u^2_{tr}}{\Delta_\lambda^{3/2}}+\frac{1}{2}\frac{u_{t}u_{tt}\left(1+u^2_{r}\right)-u^2_{t}u_{r}u_{tr}}{\Delta_\lambda^{3/2}}\right)_t\\
&-\left(\frac{1}{2}\frac{ru^2_{tr}\left(1-u^2_{t}\right)+ru^2_{tt}\left(1+u^2_{r}\right)}{\Delta_\lambda^{3/2}}+\frac{1}{2}\frac{u_{t}u_{tr}\left(1-u^2_{t}\right)+u^2_{t}u_{r}u_{tt}}{\Delta_\lambda^{3/2}}+\frac{1}{4}\frac{u^2_{t}}{r\Delta_\lambda^{1/2}}\right)_r\\
&=P_\lambda^1,
\end{aligned}
\end{equation}
where
\begin{equation}
\begin{aligned}
&P_\lambda^1=r\left(\frac{1}{\Delta_\lambda^{3/2}}\right)_r\left(-\frac{u^2_{tt}}{2}\left(1+u^2_{r}\right)+u_{tr}u_{tt}u_{r}u_{t}+\frac{u^2_{tr}}{2}\left(1-u^2_{t}\right)\right)\\
&+\left(\frac{r}{\Delta_\lambda^{3/2}}\right)\left(u_{tt}u_{r}-u_{tr}u_{t}\right)\left(u^2_{tr}-u_{tt}u_{rr}\right)\\
&+\frac{1}{4}\frac{u^2_{t}}{r^2\Delta_\lambda^{1/2}}-\left(\frac{1}{\Delta_\lambda^{1/2}}\right)_r\frac{u^2_{t}}{4r}+\left(\frac{1}{\Delta_\lambda^{1/2}}\right)_t\frac{u_{t}u_{r}}{2r}.
\end{aligned}
\end{equation}

Then we study the ``inner product"  induced by the law of equilibrium like the section \eqref{55}.

  Considering the ``inner product" induced by the law of equilibrium \eqref{phu1} and \eqref{phu2}, we have:
\begin{equation}
C^{\lambda}\left(u\left(t\right)\right):=
\begin{pmatrix}
a &b \\
c           & d

\end{pmatrix},
\end{equation}
where
\begin{equation*}
a:= \frac{u_r u_{t}}{r\Delta_\lambda^{1/2}},
\end{equation*}

\begin{equation*}
b:=\frac{u^2_{t}+u^2_{r}}{r\Delta_\lambda^{1/2}},
\end{equation*}

\begin{equation*}
c:= \frac{1}{2}\frac{ru^2_{\lambda t}\left(1+u^2_r\right)+ru^2_{ \lambda r}\left(1-u^2_t\right)}{\Delta_\lambda^{3/2}},
\end{equation*}

\begin{equation*}
d:= \frac{r u_{\lambda t}u _{\lambda r}\left(1-u^2_t\right)+r u_r u_t u^2_{\lambda t}}{\Delta_\lambda ^{3/2}}.
\end{equation*}

By some simple calcaution, we have:
\begin{equation}
\begin{aligned}
det C^{\lambda}\left(u\left(t\right)\right)&=\frac{1}{4\Delta_\lambda^2}\left(u_{t}u_{\lambda t}-u_{r}u_{\lambda r}\right)^2+\frac{1}{4\Delta_\lambda^2}\left(u_{t}u_{\lambda r}-u_{r}u_{\lambda t}\right)^2\\
&+\frac{1}{4\Delta_\lambda^2}\left(u^2_{t}+u^2_{r}\right)\left( u_r u_{\lambda t}- u_{t}u_{\lambda r}\right)^2\\
&+\frac{1}{2\Delta_\lambda^2}\left(u^2_{t}+u^2_{r}\right)\left(u_{\lambda t}u_{r}-u_{t}u_{\lambda r}\right)\left(u_{\lambda r}u_{r}-u_{\lambda t}u_{t}\right)\\
&+\frac{1}{2\Delta_\lambda^2}\left(u^2_{t}+u^2_{r}\right)\left(u_{\lambda t}u_{r}-u_{t}u_{\lambda r}\right)\left(u_{\lambda t}u_{t}-u_{\lambda r}u_{r}\right)\\
&+\frac{1}{2\Delta_\lambda^2}u^2_{t}\left(u_{\lambda r}u_{t}-u_{\lambda t}u_{r}\right)\left(u_{\lambda t}u_{r}-u_{t}u_{\lambda r}\right)\\
&+\frac{1}{2\Delta_\lambda^2}u_{t}u_{r}\left(u_{\lambda r}u_{r}-u_{t}u_{\lambda t}\right)\left(u_{\lambda t}u_{r}-u_{t}u_{\lambda r}\right).\\
\end{aligned}
\end{equation}

Considering the ``inner product" induced by the law of equilibrium \eqref{phu2} and \eqref{phu3}, we have:

\begin{equation}
D^{\lambda}\left(u\left(t\right)\right):=
\begin{pmatrix}
a & b\\
c^1+c^2            & d^1+d^2+d^3 
\end{pmatrix}
\end{equation}
where
\begin{equation*}
a:=\frac{1}{2}\frac{ru^2_{\lambda t}\left(1+u^2_r\right)+ru^2_{ \lambda r}\left(1-u^2_t\right)}{\Delta_\lambda^{3/2}},
\end{equation*}

\begin{equation*}
b:=\frac{r u_{\lambda t}u _{\lambda r}\left(1-u^2_t\right)+r u_r u_t u^2_{\lambda t}}{\Delta_\lambda ^{3/2}},
\end{equation*}

\begin{equation*}
c^1:= \frac{ru_{tt}u_{tr}\left(1+u^2_{r}\right)-ru_{t}u_{r}u^2_{tr}}{\Delta_\lambda^{3/2}},
\end{equation*}

\begin{equation*}
c^2:= \frac{1}{2}\frac{u_{t}u_{tt}\left(1+u^2_{r}\right)-u^2_{t}u_{r}u_{tr}}{\Delta_\lambda^{3/2}},
\end{equation*}

\begin{equation*}
d^1:=\frac{1}{2}\frac{ru^2_{tr}\left(1-u^2_{t}\right)+ru^2_{tt}\left(1+u^2_{r}\right)}{\Delta_\lambda^{3/2}},
\end{equation*}

\begin{equation*}
d^2:=\frac{1}{2}\frac{u_{t}u_{tr}\left(1-u^2_{t}\right)+u^2_{t}u_{r}u_{tt}}{\Delta_\lambda^{3/2}},
\end{equation*}

\begin{equation*}
d^3:=\frac{1}{4}\frac{u^2_{t}}{r\Delta_\lambda^{1/2}}.
\end{equation*}

\begin{equation}\label{zh}
\begin{aligned}
&det D^{\lambda}\left(u\left(t\right)\right)= ad^1+ad^2+ad^3-bc^1-bc^2\\
&= det D^{\lambda}_m\left(u\left(t\right)\right)\\
&+\frac{r}{4\Delta_\lambda^3}u_{t}u_{\lambda t}\left(1-u^2_{t}\right)\left(1+u^2_{r}\right)\left(u_{\lambda t}u_{tr}-u_{\lambda r}u_{tt}\right)\\
&+\frac{r}{4\Delta_\lambda^3}u_{t}u_{\lambda r}\left(1-u^2_{t}\right)\left(u_{\lambda r}u_{tr}\left(1-u^2_{t}\right)-u_{\lambda t}u_{tt}\left(1+u^2_{r}\right)+2u_{\lambda r}u_{tr}u_{t}u_{r}\right)\\
&+\frac{r}{4\Delta_\lambda^3}u^2_{t}u_{r}u_{\lambda r}\left(1-u^2_{t}\right)\left(u_{\lambda r}u_{tt}-u_{\lambda t}u_{tr}\right)\\
&+\frac{r}{4\Delta_\lambda^3}u^2_{t}u_{r}u_{\lambda t}\left(u_{\lambda r}u_{tr}\left(1-u^2_{t}\right)-u_{\lambda t}u_{tt}\left(1+u^2_{r}\right)+2u_{\lambda t}u_{tr}u_{t}u_{r}\right)\\
&+\frac{1}{8\Delta_\lambda^2}\left(u^2_{t}u^2_{\lambda t}\left(1+u^2_{r}\right)+u^2_{t}u^2_{\lambda r}\left(1-u^2_{t}\right)\right),\\
\end{aligned}
\end{equation}
where
\begin{equation*}
\begin{aligned}
det D^{\lambda}_m \left(u\left(t\right)\right):=& \frac{r^2}{4\Delta_\lambda ^3}\left(1+u^2_{r}\right)\left(1-u^2_{t}\right)\left(u_{\lambda t}u_{tr}-u_{tt}u_{\lambda r}\right)^2\\
&+\frac{r^2}{4\Delta_\lambda^3}\left(u_{\lambda r}u_{tr}\left(1-u^2_{t}\right)-u_{\lambda t}u_{tt}\left(1+u^2_{r}\right)+2u_{\lambda t}u_{tr}u_{t}u_{r}\right).
\end{aligned}
\end{equation*}

Since $u$ is a solution to the \eqref{equtl}, we can repeat exactly what we did in section \ref{ei}. 

So we have:
\begin{equation}\label{smee}
\begin{aligned}
&E_{1,u}\left(t\right):=E_{1}\left(u\left(t\right)\right)\lesssim \eps^2,\\
&E_{2,u}\left(t\right):=E_{2}\left(u\left(t\right)\right)\lesssim \eps^2,\\
&M_{0,u}\left(t\right):=M_{0}\left(u\left(t\right)\right)\lesssim \eps^2,\\
&M_{u}\left(t\right):=M\left(u\left(t\right)\right)\lesssim\eps^2,\\
&\xi_{1,u}\left(t\right):=\xi_{1}\left(u\left(t\right)\right)\lesssim \eps^4,\\
&\xi_{2,u}\left(t\right):=\xi_{2}\left(u\left(t\right)\right)\lesssim \eps^4,\\
&\eta_{1,u}\left(t\right):=\eta_{1}\left(u\left(t\right)\right)\lesssim \eps^4,\\
&\eta_{2,u}\left(t\right):=\eta_{2}\left(u\left(t\right)\right)\lesssim \eps^4,
\end{aligned}
\end{equation}
where the notations $E_{1}\left(u\left(t\right)\right)$, $E_{2}\left(u\left(t\right)\right)$, $M_{0}\left(u\left(t\right)\right)$, $M\left(u\left(t\right)\right)$, $\xi_{1}\left(u\left(t\right)\right)$, $\xi_{2}\left(u\left(t\right)\right)$, $\eta_{1}\left(u\left(t\right)\right)$,  and $\eta_{2}\left(u\left(t\right)\right)$  are defined as same as section \ref{NP}.

Moreover, we also define:
\begin{equation*}
	E_{\lambda} \left(t\right):= \int r \left(u^2_{\lambda t} + u^2_{\lambda r} \right) dr
\end{equation*}

\begin{equation*}
\hat{E}_{\lambda}\left(t\right):=\int\frac{1}{2}\frac{u^2_{tr}\left(1-u^2_{t}\right)+u^2_{tt}\left(1+u^2_{r}\right)}{\Delta_{\lambda}^{3/2}} r dr,
\end{equation*}

\begin{equation*}
M_{0,\lambda }\left(t\right):=\int_{0}^{t}\int u^2_{\lambda t}u^2_{r}+u^2_{\lambda r}u^2_{t}+u^2_{\lambda t}u^2_{t}+u^2_{\lambda r}u^2_{r} drds,
\end{equation*}

\begin{equation*}
M_{0,\lambda,1}\left(t\right):=\int_{0}^{t}\int u^2_{\lambda t}u^2_{t}+u^2_{\lambda r}u^2_{t} drds,
\end{equation*}

\begin{equation*}
M_{0,\lambda,2}\left(t\right):=\int_{0}^{t}\int u^2_{\lambda t} u ^2_{r} + u^2_{\lambda r}u^2_{r} drds,
\end{equation*}

\begin{equation*}
\zeta_\lambda\left(t\right):=\int^t_0\int det C^{\lambda } \left(u\left(t\right)\right) drds,
\end{equation*}

\begin{equation*}
\gamma_{1,\lambda}\left(t\right):=\int^t_0\int det D^{\lambda}_m\left(u \left(t\right)\right) drds,
\end{equation*}

\begin{equation*}
\gamma_{2,\lambda}\left(t\right):=\int^t_0\int det D^{\lambda}\left(u\left(t\right)\right) drds,
\end{equation*}

We note:
\begin{equation}
	E_{\lambda} \left(t\right)\leq 4 \hat{E}_{\lambda}\left(t\right).
\end{equation}

According to \eqref{phu1}, we have:
\begin{equation}
\begin{aligned}
\hat{E}_{\lambda}\left(t\right)&\leq \hat{E}_{\lambda}\left(0\right) + \int_{0}^{t}\int\lvert W_{\lambda}^1\rvert drdt.
\end{aligned}
\end{equation}

It is easy to know:
\begin{equation}
\begin{aligned}
\int_{0}^{t}\int \lvert Q_\lambda^2 \rvert drdt &\leq 4M_u\left(t\right)+4\eps^{1/2}M^{1/2}_{0,u}\left(t\right)M^{1/2}_{0,u}\left(t\right)\\
&+2\eps^{3/2}M^{1/2}_{0,u}\left(t\right)M^{1/2}_{0,u}\left(t\right)+2\eps M_u\left(t\right)\\
&\leq 4\eps^2+4\eps^{5/2}+2\eps^{7/2}+2\eps^{3},
\end{aligned}
\end{equation}
and
\begin{equation}
\begin{aligned}
\int_{0}^{t}\int \lvert P_\lambda^1 \rvert drdt\leq& M_{1,u}\left(t\right) +8M^{1/2}_{0,u}\left(t\right)\xi^{1/2}_{1,u}\left(t\right)+8M_u^{1/2}\left(t\right)\xi^{1/2}_{1,u}\left(t\right)+16\eta^{1/2}_{1,u}\left(t\right)\xi^{1/2}_{1,u}\left(t\right)\\
&+16\eta_{1,u}^{1/2}\left(t\right)M^{1/2}_{0,u}\left(t\right)+2\eta^{1/2}_{1,u}M_u^{1/2}\left(t\right)+\eps M_{0,u}^{1/2}\left(t\right)M_u^{1/2}\left(t\right)+\eps M_u\left(t\right).
\end{aligned}
\end{equation}

In addition,

\begin{equation}
\begin{aligned}
\int_{0}^{t}\int \lvert W_{\lambda}^1 \rvert  drdt \leq& 8\zeta^{1/2}_\lambda\left(t\right)\gamma^{1/2}_{1,\lambda}\left(t\right)+16M^{1/2}_{0,\lambda}\left(t\right)\gamma^{1/2}_{1,\lambda}\left(t\right) \\
&+16\eps M^{1/2}_{0,\lambda}\left(t\right)\gamma^{1/2}_{1,\lambda}\left(t\right).
\end{aligned}
\end{equation}

From \eqref{zh}, we know:
\begin{equation}
\begin{aligned}
\gamma_{1,\lambda}\left(t\right) + \frac{1}{8} M_{0,\lambda,1}\left(t\right) \leq \gamma_2\left(t\right) + \frac{1}{2} M^{1/2}_{0,\lambda,1}\left(t\right)\gamma^{1/2}_{1,\lambda}\left(t\right)+\frac{1}{2}\eps M^{1/2}_{0,\lambda,1}\left(t\right)\gamma^{1/2}_{1,\lambda}\left(t\right).
\end{aligned}
\end{equation}

Using the div-curl lemma \ref{dc}, we have:
\begin{equation}
\begin{aligned}
\zeta _\lambda\left(t\right)&\lesssim\left(\hat{E}_{1,u}\left(0\right)  + \sup\limits_{0\leq t \leq T} \hat{E}_{1,u}\left(t\right) + \int_{0}^{t}\int \lvert Q_\lambda^2 \rvert drdt\right )\\
&\cdot\left(\hat{E}_\lambda \left(0\right) +\sup\limits_{0\leq t \leq T}\hat{E}_\lambda \left(t\right)+\int_{0}^{t}\int \lvert W_\lambda^1 \rvert drdt \right)\\
&\lesssim\left(\eps^2  + \sup\limits_{0\leq t \leq T} \hat{E}_{1,u}\left(t\right) + \int_{0}^{t}\int \lvert Q_\lambda^2 \rvert drdt\right )\\
&\cdot\left(\hat{E}_\lambda \left(0\right) +\sup\limits_{0\leq t \leq T}\hat{E}_\lambda \left(t\right)+\int_{0}^{t}\int \lvert W_\lambda^1 \rvert drdt \right),\\
\end{aligned}
\end{equation}

\begin{equation}
\begin{aligned}
&\gamma_{2,\lambda}\left(t\right)\lesssim\left(\hat{E}_\lambda\left(0\right)  + \sup\limits_{0\leq t \leq T} \hat{E}_\lambda\left(t\right) + \int_{0}^{t}\int \lvert W^1_\lambda \rvert drdt\right)\\
&\cdot\left(\hat{E}_{2,u}\left(0\right)+\hat{E}^{1/2}_{1,u}\left(0\right)\hat{E}^{1/2}_{2,u}\left(0\right) +\sup\limits_{0\leq t \leq T}\left(\hat{E}_{2,u}\left(t\right)+\hat{E}^{1/2}_{1,u}\left(t\right)\hat{E}^{1/2}_{2,u}\left(t\right)\right)+\int_{0}^{t}\int \lvert P_\lambda^1 \rvert drdt\right)\\
&\lesssim\left(\hat{E}_\lambda \left(0\right)  + \sup\limits_{0\leq t \leq T} \hat{E}_\lambda \left(t\right) + \int_{0}^{t}\int \lvert W_\lambda^1 \rvert drdt \right)\\
&\cdot\left(\eps^2 +\sup\limits_{0\leq t \leq T}\left(\hat{E}_2\left(t\right)+\hat{E}^{1/2}_{1,u}\left(t\right)\hat{E}^{1/2}_{2,u}\left(t\right)\right)+\int_{0}^{t}\int \lvert P_\lambda^1 \rvert drdt \right).
\end{aligned}
\end{equation}

Noting that
\begin{equation}
\begin{aligned}
&u^2_{r}u^2_{\lambda t}\lesssim\left(u_{r}u_{\lambda t}-u_{t}u_{\lambda r}\right)^2 + u^2_{t}u^2_{\lambda r}\\
&u^2_{r}u^2_{\lambda r}\lesssim\left(u_{r}u_{\lambda r}-u_{t}u_{\lambda t}\right)^2 + u^2_{t}u^2_{\lambda t},
\end{aligned}
\end{equation}
so we have:
\begin{equation}
\begin{aligned}
M_{0,\lambda,2}\left(t\right) &\lesssim M_{0,\lambda,1}\left(t\right) + \zeta_\lambda \left(t\right) .
\end{aligned}
\end{equation}
By definition, we know:
\begin{equation}
M_h\left(t\right) = M_{h,1}\left(t\right)+M_{h,2}\left(t\right) .
\end{equation}

Based above, we can obtain the following estimates
\begin{equation}\label{st}
E_\lambda\left(t\right) \leq 16 E_\lambda\left(0\right)
\end{equation}
by the continuous induction method and  standard bootstrap arguments.

From \eqref{st}, we get:
\begin{equation}\label{tds}
	\|\frac{\partial u}{\partial \lambda}\|_{H^1_{rad}}\leq 16 \left( \|\phi_0-\widetilde{\phi}_0\|_{H^1_{rad}} + \|\phi_1-\widetilde{\phi}_1\|_{L^2_{rad}}\right) .
\end{equation}

Combining \eqref{tl} and \eqref{tds}, we can obtain:
	\begin{equation}
\| \phi_{t}\left(t\right)-\widetilde{\phi}_t\left(t\right)\|_{L^2_{rad}} + \|\phi\left(t\right)-\widetilde{\phi}\left(t\right)\|_{H^1_{rad}} \leq 16 \left( \|\phi_0-\widetilde{\phi}_0\|_{H^1_{rad}} + \|\phi_1-\widetilde{\phi}_1\|_{L^2_{rad}}\right)
\end{equation}
for any $t\in \left(0, \infty\right)$.

\section{main results}\label{99}
In this section, we will  prove the  theorem \ref{1.3}, theorem \ref{1.4}, and theorem \ref{1.5}.

\begin{proof}[theorem \ref{1.3}]
	
	Combining the global existence  in section \ref{77} and stability in section \ref{88}, we have proven the theorem \ref{1.3}.
\end{proof}

\begin{proof}[theorem \ref{1.4}]
	
	 When then initial data  $\|\left(\phi_0, \phi_1\right)\|_{H_{rad}^2\left(\mathbb{R}^2\right) \times H_{rad}^{1}\left(\mathbb{R}^2\right)} \leq \eps$ which is sufficiently small, we can  modify the initial data as follow:
	 \begin{equation}
	 	\left(J_{\frac{1}{n}}\phi_0, J_{\frac{1}{n}}\phi_1\right) \in H^s\left(\mathbb{R}^2\right) \times H^{s-1} \left(\mathbb{R}^2\right) \quad \forall s\ge 3,
	  \end{equation}
	  where the $J_{\frac{1}{n}}$ is the standard modify  operator.

From  the theorem \ref{1.3}, we know for any $n$, there exists a 
$\phi_{n} \in C^0\left(\left[0, T\right], H^s\left(\mathbb{R}^2\right)\right) \cap C^1\left(\left[0, T\right], H^{s-1}\left(\mathbb{R}^2\right)\right)$  with $s\ge 3$ for any $T>0$. Besides,  $\{\phi_{n}\}$ is uniformly bounded in $C^0\left(\left[0, T\right], H_{rad}^2\left(\mathbb{R}^2\right)\right)$ $\cap C^1\left(\left[0, T\right], H_{rad}^{1}\left(\mathbb{R}^2\right)\right)$.

So we know: 
\begin{equation}
	\phi_{n} \stackrel{\mathrm{w}^*}{\to} \phi \in L^{\infty}\left(\left[0, T\right], H_{rad}^2\left(\mathbb{R}^2\right)\right)\cap W^{1,\infty}\left(\left[0, T\right], H_{rad}^{1}\left(\mathbb{R}^2\right)\right) 
\end{equation}
in the weekly $-*$ sense.

Based above and combining the stability in section \ref{88}, we have proven the theorem \ref{1.4}.

\end{proof}

\begin{proof}[theorem \ref{1.5}]
	For any initial data $\left(\phi_0, \phi_1\right) \in H_{rad}^2\left(\mathbb{R}^2\right) \times H_{rad}^{1} \left(\mathbb{R}^2\right)$, we can find a $\delta_1$ to make the integral
	\begin{equation}
		\int_0^{2\delta_1} r\phi^2_{0rr}+\frac{\phi^2_{0r}}{r} + r\phi^2_{_{1r}} + \frac{\phi^2_1}{r} dr \leq \eps^2 .
	\end{equation}

By finite propagation speed of waves and applying theorem \ref{1.4}, we know when $0\leq t\leq 2\delta_1$, $0 \leq  r \leq 2 \delta_1$ and $ r+t \leq 2\delta_1$, the equation \eqref{equ} exists a local solution. Particularly, when $ 0\leq t\leq \frac{\delta_1}{2}$,  $0 \leq  r \leq 2 \delta_1$ and  $t \leq -r+2\delta_1$, the equation \eqref{equ} exists a local solution. Considering the region $r\ge \delta_1$, this case essentially becomes one-dimensional. So according to the results in Zhou \cite{Zhou 1996}, we know  there exists a $\delta_2$ and when $r\ge \delta_1$, $t\leq \delta_2$, $t\leq r-\delta_1$, the equation \eqref{equ} exists a local solution. Then we choose $\delta$:= min$\{\frac{\delta_1}{2}, \delta_2\}$ which may depend on the $\|\left(\phi_0, \phi_1\right)\|_{H_{rad}^2\left(\mathbb{R}^2\right) \times H_{rad}^{1}\left(\mathbb{R}^2\right)}$ and the profiles of initial data $\left(\phi_0, \phi_1\right) $. We know the equation \eqref{equ} exists a local solution $\phi \in L^{\infty}\left(\left[0, T^*\right], H_{rad}^2\left(\mathbb{R}^2\right)\right) \cap  W^{1,\infty}\left(\left[0, T^*\right], H_{rad}^{1}\left(\mathbb{R}^2\right)\right)$ for any $T^* \leq \delta$. So we have proven the theorem \ref{1.5}.
\end{proof}

\par{\bf Acknowledgement}:

This work was supported by the National Natural Science Foundation of China (No. 12171097), the Key Laboratory of Mathematics for Nonlinear Sciences (Fudan University), Ministry of Education of China, P.R.China. Shanghai Key Laboratory for Contemporary Applied Mathematics, School of Mathematical Sciences, Fudan University, P.R. China, and by Shanghai Science and Technology Program [Project No. 21JC1400600].

\par{\bf Author declarations}:

{\bf Conflict of interest}:

The authors have no condlicts of interest to disclose.

{\bf Data availability}:

Data sharing is not applicable to this article as no new data were created or analyzed in this study.

\end{document}